\newtheorem{theorem}[equation]{Theorem}
\newtheorem{lemma}[equation]{Lemma}
\newtheorem{prop}[equation]{Proposition}
\newtheorem{definition}[equation]{Definition}
\newtheorem{example}[equation]{Example}
\theoremstyle{remark}
\newtheorem{remark}[equation]{Remark}
\newtheorem{notation}[equation]{Notation}
\numberwithin{equation}{section}
\newcommand{\dbold}{{\mathbf{d}}}
\newcommand{\dboldtilde}{\tilde{\dbold}}
\newcommand{\gtilde}{\tilde{g}}
\newcommand{\rtilde}{\tilde{r}}
\newcommand{\nablatilde}{\widetilde{\nabla}}
\newcommand{\Sph}{\mathbb{S}}
\newcommand{\Rcap}{\mathsf{R}}
\newcommand{\R}{\mathbb{R}}
\newcommand{\ddiv}{\mathrm{div}}
\newcommand{\trace}{\text{tr}}
\newcommand{\fbold}{\boldsymbol{f}}
\newcommand{\phibold}{\boldsymbol{\phi}}
\newcommand{\cunder}{\underline{c}}
\newcommand{\nablabar}{\nabla}
\newcommand{\nablasigma}{\nabla^{\Sigma}}
\newcommand{\Phip}{\Phi_p}
\newcommand{\Psip}{\Psi_p}
\begin{document}

\title[]{Area Bounds for Free Boundary Minimal Surfaces in a Geodesic Ball in the Sphere}
\author[B.~Freidin]{Brian~Freidin}
\author[P.~McGrath]{Peter~McGrath}
\date{}
\address{Department of Mathematics, Brown University, Providence,
RI 02912} \email{bfreidin@math.brown.edu}
\address{Department of Mathematics, University of Pennsylvania, Philadelphia PA 19104} 
\email{pjmcgrat@sas.upenn.edu}
\maketitle
\begin{abstract}
We extend to higher dimensions earlier  sharp bounds for the area of two dimensional free boundary minimal surfaces contained in a geodesic ball of the round sphere.  This follows work of Brendle and Fraser-Schoen in the euclidean case. 
\end{abstract}

\section{Introduction}
\label{S:intro}

A problem of recent interest in geometric analysis is to identify sharp area bounds for free boundary minimal surfaces.  Fraser-Schoen proved \cite[Theorem 5.4]{FS1} any  free boundary $\Sigma^2 \subset B^n$, where $B^n$ is a unit $n$-dimensional euclidean ball, has area at least $\pi$; equality holds precisely when $\Sigma$ is congruent to a disk.  Following a question of Guth, Schoen  conjectured the analogous sharp bound $|\Sigma^k |\geq |B^k|$ for free boundary $\Sigma^k \subset B^n$ of any dimension.  This was later proved by Brendle \cite{Brendle:area}.  In \cite{FM}, the authors proved analogous bounds for free boundary $\Sigma^2$ in certain positively curved geodesic balls, 
including any such ball contained in a hemisphere of the round $\Sph^n$.  In this article we extend results of \cite{FM} to higher dimensions.

\begin{theorem}
\label{Tmain}
Let $\Sigma^k \subset B^n_R$ be a free boundary minimal surface, where $B^n_R\subset \Sph^n$ is a geodesic ball with radius $R\leq \pi/2$, and $k=4$ or $k=6$.  Then $|\Sigma |\geq |B^k_R|$, where $|B^k_R|$ is the volume of a $k$-dimensional geodesic ball of radius $R$.  If equality holds, then $\Sigma$ coincides with some such ball. 
\end{theorem}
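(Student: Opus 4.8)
The plan is to adapt to the round sphere the monotonicity argument Brendle used in the Euclidean ball, following \cite{FM} (which handled $k=2$); the genuinely new input is a one--variable inequality that we can verify only for $k\in\{2,4,6\}$. Realize $\Sph^n\subset\R^{n+1}$, let $p_0$ be the center of $B^n_R$ and $r=d_{\Sph^n}(\cdot,p_0)$, so that $B^n_R=\{r\le R\}$ and $\partial B^n_R=\{r=R\}$ has outward unit normal $\partial_r$ in $\Sph^n$. Since $\Sigma^k$ is minimal in $\Sph^n$ it is stationary for area among admissible variations, the position vector satisfies $\Delta_\Sigma\vec x=-k\vec x$, and the free boundary condition says the outward conormal $\nu$ of $\partial\Sigma$ in $\Sigma$ coincides along $\partial\Sigma$ with $\partial_r$, so that $|\nablasigma r|=1$ there. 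Using the sphere Hessian $\nabla^2 r=\cot r\,(g-dr\otimes dr)$ one computes, for a radial field $\psi(r_\xi)\,\partial_{r_\xi}$ about a point $\xi$ and $\Sigma$ minimal,
\[
\ddiv_\Sigma\bigl(\psi(r_\xi)\,\partial_{r_\xi}\bigr)=\bigl(\psi'+(k-1)\psi\cot r_\xi\bigr)+\lvert\nablabar^{\perp}r_\xi\rvert^{2}\bigl(\psi\cot r_\xi-\psi'\bigr),
\]
where $\nablabar^{\perp}r_\xi$ is the component of $\partial_{r_\xi}$ normal to $\Sigma$ inside $\Sph^n$.

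Choose the profile $\psi_k$ solving $\psi_k'+(k-1)\psi_k\cot r=c_k$ with $\psi_k(r)\sim r^{1-k}$ as $r\to0$, i.e. $\psi_k(r)=(\sin r)^{1-k}\bigl(c_k\int_0^r(\sin s)^{k-1}\,ds+1\bigr)$ for a constant $c_k$ to be chosen; then the identity above becomes $\ddiv_\Sigma(\psi_k(r_\xi)\partial_{r_\xi})=c_k+\lvert\nablabar^{\perp}r_\xi\rvert^{2}(k\psi_k(r_\xi)\cot r_\xi-c_k)$. Integrating this over $\Sigma\cap B^n_\rho(\xi)$, letting the inner radius shrink to $0$ and the outer one grow: the inner flux tends to the density $\Theta(\xi)\ge1$ times the area of the unit $(k-1)$-sphere, the mean curvature term drops out by minimality, and the boundary flux along $\partial\Sigma$ is computed from $\nu=\partial_r$. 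One checks that equality holds on the model $\Sigma=B^k_R$ with $\xi=p_0$, where $\nablabar^{\perp}r\equiv0$. Hence, whenever $k\psi_k\cot r\ge c_k$ on the range of $r_\xi$ that occurs, the quantity $\rho\mapsto\lvert B^k_\rho\rvert^{-1}\mathcal H^k(\Sigma\cap B^n_\rho(\xi))$, corrected by the $\psi_k$-flux across $\partial B^n_\rho(\xi)$, is monotone; taking $\xi=p_0$ and running $\rho$ from $0$ to $R$ yields $\lvert\Sigma\rvert\ge\lvert B^k_R\rvert$ under the extra assumption $p_0\in\Sigma$.

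The crux is removing that assumption, exactly as in the Euclidean proof. Fix an arbitrary interior point $\xi\in\Sigma$, and let $\rho_0(\xi)$ be the smallest radius with $B^n_{\rho_0(\xi)}(\xi)\supseteq B^n_R$ (so $\rho_0(\xi)\in[R,2R)$, with equality iff $\xi=p_0$). One shows the corrected density ratio centered at $\xi$ stays monotone all the way to $\rho=\rho_0(\xi)$: for $\rho$ below $d(\xi,\partial B^n_R)$ this is the interior computation, while for larger $\rho$, once $B^n_\rho(\xi)$ protrudes from $B^n_R$, one subtracts a correction flux of $\psi_k(r_\xi)\partial_{r_\xi}$ across the piece of $\partial B^n_R$ inside $B^n_\rho(\xi)$, tailored by the free boundary condition so that at $\rho=\rho_0(\xi)$ the quantity equals $\lvert B^k_R\rvert^{-1}\lvert\Sigma\rvert$; monotonicity then gives $\lvert B^k_R\rvert^{-1}\lvert\Sigma\rvert\ge\Theta(\xi)\ge1$. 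Both the interior monotonicity ($k\psi_k(r)\cot r\ge c_k$) and the monotonicity of the correction reduce, after fixing $c_k$ appropriately, to the nonnegativity of a single explicit function $P_k(u)$ of $u=\cos r\in(\cos 2R,1]$; for $\Sigma=B^k_R$ all these inequalities are equalities because $\nablabar^{\perp}r_\xi\equiv0$ there. For $k$ even, $\int_0^r(\sin s)^{k-1}ds$ is a polynomial in $\cos r$, so $\psi_k$ and $P_k$ are rational functions of $u$ with explicit numerators of degree growing with $k$; for $k=2$ (which is \cite{FM}) and for $k=4$ and $k=6$ the resulting polynomial inequalities hold on $(\cos 2R,1]\subseteq[-1,1]$ by a direct (if tedious) check, whereas for larger even $k$ they fail and for odd $k$ the profile $\psi_k$ is not even elementary. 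This is precisely why the conclusion is limited to $k=4$ and $k=6$.

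For equality: if $\lvert\Sigma\rvert=\lvert B^k_R\rvert$ then all the inequalities above are equalities, so $\lvert\nablabar^{\perp}r_\xi\rvert\equiv0$ on $\Sigma$ for the relevant $\xi$; thus $\partial_{r_\xi}$ is tangent to $\Sigma$, the geodesics of $\Sph^n$ issuing from such $\xi$ and meeting $\Sigma$ remain in $\Sigma$, and the second fundamental form of $\Sigma$ in $\Sph^n$ vanishes. Hence $\Sigma$ is totally geodesic, and the only totally geodesic free boundary $k$-submanifolds of $B^n_R$ are the geodesic $k$-balls of radius $R$ centered at $p_0$, so $\Sigma$ is one of them. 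The main obstacle is twofold: (i) setting up the correction term and choosing $c_k$ so that the whole argument collapses to the clean statement $P_k\ge0$, and (ii) the elementary but delicate verification that $P_k\ge0$, which is exactly the step that confines the theorem to $k\in\{2,4,6\}$.
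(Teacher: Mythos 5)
Your outline replaces the paper's boundary-point calibration (Proposition \ref{Pproof}: a vector field with a prescribed singularity at $y\in\partial\Sigma\subset\partial B_R$, tangent to $\partial B_R$, with $\ddiv_\Sigma W\le 1$) by an interior-point monotonicity formula with a boundary correction, in the spirit of \cite{Brendle:area2}. As written this has a genuine gap, in two places. First, the interior monotonicity itself is not established and is doubtful on the sphere: with $\psi_k=(\sin r)^{1-k}(c_k I_k(r)+1)$ your identity gives $\ddiv_\Sigma(\psi_k\partial_{r_\xi})=c_k+\lvert\nabla^\perp r_\xi\rvert^2\,kw^{-k}\bigl(\cos r-c_k\int_0^r I_k(t)\sin t\,dt\bigr)$, and the required sign $k\psi_k\cot r\ge c_k$ forces $c_k\le 0$ already at $r=\pi/2$ (a radius that occurs, since $r_\xi$ ranges up to $\rho_0(\xi)<2R\le\pi$), which is incompatible with the normalization needed to extract $|\Sigma|\ge|B^k_R|$ from the flux identity. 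This is exactly the curvature obstruction the paper points to: positive ambient curvature makes $\ddiv_\Sigma\Phi_p\le 1$ (Lemma \ref{Lphidiv}, the direction used here) but works \emph{against} an increasing density ratio, and indeed no monotonicity formula is known for minimal submanifolds of $\Sph^n$.

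Second, and more importantly, the entire technical content of the theorem is hidden in your phrases ``a correction flux \dots tailored by the free boundary condition'' and ``reduce \dots to the nonnegativity of a single explicit function $P_k(u)$.'' You never construct the correction, never derive $P_k$, and never verify $P_k\ge 0$. In the paper this is precisely where the work lies: the correction is the singular integral $\int_R^\pi h(s)\Psi_{\gamma(s)}\,ds$ with $h$ determined by a first-order linear system (Lemmas \ref{Lsys6}, \ref{Lsystem}), and the decisive step is the nonnegativity of $h$ (immediate from Definition \ref{dW} for $k=4$, Lemma \ref{Lhpos} for $k=6$, and apparently false for $k=8$). Without an explicit construction and positivity proof of your $P_k$, the restriction to $k=4,6$ is asserted rather than proved. (Minor point: your reason for excluding odd $k$ is also off --- $\int_0^r\sin^{k-1}s\,ds$ is elementary for odd $k$; what fails is that it is not polynomial in $\cos r$, so the cancellation scheme of Remark \ref{RPsi3} breaks, not the elementarity of $\psi_k$.)
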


Applying the proof of Theorem \ref{Tmain} to a sequence of balls $B^n_R$ as above with radii going to zero in combination with a rescaling argument recovers in the limit (see \ref{Peuclid} for a precise statement) the euclidean bounds in \cite{Brendle:area}, giving another proof of those results in the dimensions above. 

A corollary of the euclidean area bounds mentioned above is that free boundary submanifolds of a euclidean ball satisfy the sharp isoperimetric inequality $|\partial \Sigma^k |^k/ |\Sigma^k |^{k-1} \geq |\partial B^k|^k/ |B^k|^{k-1}$.  The class of minimal submanifolds $\R^n$ for which this sharp isoperimetric inequality is known to hold is relatively small and includes also absolutely area minimizing submanifolds \cite{Almgren} and two-dimensional minimal surfaces with radially connected boundary \cite{ChoeEuc}. 
It would be interesting to know whether the submanifolds considered in Theorem \ref{Tmain} satisfy the sharp spherical isoperimetric inequality.   In dimension 2, Choe-Gulliver \cite[Remark 1]{ChoeGulliverI} have asked more generally whether every minimal surface $\Sigma^2$ contained in a hemisphere of $\Sph^n$ (with no conditions on the boundary) satisfies the sharp $\Sph^2$-isoperimetric inequality $4\pi |\Sigma | \leq |\partial \Sigma|^2 + | \Sigma|^2$. 

A  properly immersed submanifold $\Sigma^k\subset \Omega^n$ in a domain of a Riemannian manifold is a \emph{free boundary minimal submanifold} if $\Sigma$ is minimal, $\partial \Sigma \subset \partial\Omega$, and $\Sigma$ intersects $\partial \Omega$ orthogonally.  Such submanifolds are volume-critical among all deformations which preserve the condition $\partial \Sigma \subset \partial\Omega$.  Free boundary minimal submanifolds have been widely studied in the last decade, and many fundamental questions regarding their existence and uniqueness remain unanswered. 

The proof of Theorem \ref{Tmain} is motivated by Brendle's ingenious approach in \cite{Brendle:area}.
There Brendle applies the divergence theorem to a vector field $W$ with the following properties:
\begin{enumerate}[label=(\roman*).]
\item  $W$ is defined on $B^n\setminus \{ y\}$ and has a prescribed singularity at $y\in \partial B^n$.
\item  $W$ is tangent to $\partial B^n$ along $\partial B^n\setminus \{y\}$.
\item $\ddiv_\Sigma W\leq 1$ for any submanifold $\Sigma^k \subset B^n$. 
\end{enumerate}

In the euclidean setting of \cite{Brendle:area}, $W$ is a sum of a radial field with divergence bounded above by $1$ centered at $0$ and a singular field with nonpositive divergence centered at $y$.  When the dimension $k$ of the submanifold is greater than two, $W$ contains an integral term manufactured to cancel an unfavorable 
 term arising from the dominant singular part.

Unfortunately, the analogous field  -- even in dimension two -- in the setting of Theorem \ref{Tmain} no longer satisfies (iii).  It turns out however that a judiciously chosen convex combination  of fields -- each of which has divergence bounded above by 1 -- can be arranged which satisfies (i)-(iii).
The singular part is governed by a vector field $Z$ of the form 
\[ 
Z = \Psi_y+ \int_{R}^\pi h(s) \Psi_{\gamma(s)} \, ds,
\] 
where $\Psi_y$ is a dominant term with a singularity at $y$, and the singular integral term (integrated along the geodesic segment $\gamma$ connecting $y$ and the antipode of $B_R$'s center)  is manufactured as in the euclidean case to ensure that $W$ is tangent to the geodesic sphere $\partial B_R$ along $\partial B_R\setminus \{ y\}$.   Idiosyncratic aspects of the formula for the volume $|B_R^k|$ of a $k$-dimensional geodesic ball of radius $R$ in $\Sph^n$ make this term fundamentally more complicated than its counterpart in \cite{Brendle:area}, which has several consequences.  

One such aspect is a structural difference between expressions for $|B_R^k|$ when $k$ is even and when $k$ is odd.   Because of this, we are presently able to propose a scheme to adequately construct $Z$ only for even $k$ (see \ref{dZgen} and \ref{RPsi3}).    A similar dichotomy is present in formulae related to other PDE, for example in the solution of the wave equation on $\R^n$ \cite[Theorems 2.4.2, 2.4.3]{Evans} and in formulas for the heat kernel on hyperbolic space $\mathbb{H}^n$ \cite{Grigoryan} and on the sphere $\Sph^n$ \cite{Nagase}.

Another consequence is that it is rather trivial (see \ref{Rhalf}) to prove the sharp bound of Theorem \ref{Tmain}  in the special case when $B^n_R$ is a hemisphere -- one may actually take $W = \Psi_y$ and  $h$ identically zero -- but more challenging to understand the state of affairs for general $R$ and $k$.

 Indeed, when $k$ is an even integer $2j$, $h$ is determined by the solution of an initial value problem associated with a $(j-1)\times(j-1)$ first order linear system of differential equations (see \ref{Lsystem}).  Even for small $j$, the associated $h$ is quite involved -- when $j=2$, for example,
\begin{align*}
h(s) = \cunder \left\{ (1+2\csc^2 R) \sin^3 s + 
 ( \cos^2 s -\cos R \cos s - \frac{1}{3} \sin^2 R)\sin s  \left( \frac{\cot(s/2)}{\cot(R/2)}\right)^{\cos R}\right\},
\end{align*}
where $\cunder: =  (3\cos R \csc^2 R)/(1+3\sin^2 R)$.  By contrast, the appropriate euclidean analogue of $h$ \cite[equation (1)]{Hitotumatu} in dimension $k\geq 3$ is simply $s^{k-3}$.  A key step in our method of proof is to verify that $h$ is nonnegative.  We are able to confirm this for $j=2$ and $j=3$ and thus prove Theorem \ref{Tmain} in dimensions $k=4$ and $k = 6$.  When $k=8$ and for certain values of $R$, numerical computations indicate that $h$ is \emph{not} strictly nonnegative and the method appears to break down. 

The calibration vector field strategy in the sprit of \cite{Brendle:area} appears to be quite flexible and has been used recently by Brendle-Hung \cite{Brendle:area2} to prove a sharp lower bound for the area of a minimal submanifold $\Sigma^k \subset B^n$ passing through a prescribed point $y\in B^n$ (see also \cite{Zhu}).

The approach here is also closely related to work of Choe \cite{ChoeCrelle} and Choe-Gulliver \cite{ChoeGulliverM, ChoeGulliverI} on isoperimetric inequalities for domains on minimal surfaces.  While in that setting the geometric inequalities are favorable in a negative curvature background, in the present context positive ambient curvature is essential (see \ref{Lphidiv}) to the proof of Theorem \ref{Tmain}.  Similar interactions between curvature and geometric inequalities lead to a generalization of the classical monotonicity formula for minimal submanifolds of hyperbolic space $\mathbb{H}^n$ \cite{Anderson}, whereas no monotonicity formula is known for minimal submanifolds of the sphere (see however \cite[Lemma 2.1]{GulliverScott} for a weaker result).

\section{Notation and auxiliary results}
\label{S:notation}

Let $(\Sph^n,  g_{\Sph})$ denote the unit $n$-sphere equipped with the round metric.  Given $p\in M$, we write $\dbold_p$ for the geodesic distance function from $p$ and define a closed geodesic ball about $p$ by 
\begin{align*}
B^n_\delta(p) := \left\{ q\in M : \dbold_p(q) \leq \delta \right\}.
\end{align*}
Given $p\in \Sph^n$, recall that the punctured round unit sphere $\Sph^n\setminus \{-p\}$ is isometric to 
\begin{align}
\label{Ewarped}
\Sph^{n-1}\times [ 0, \pi), \qquad
g = dr \otimes dr + w(r)^2 g_{\Sph},
\end{align}
where $r : = \dbold_p$ and $w(r) := \sin  r$. 
Let $|B^k_R|$ be the area of any geodesic $k$-ball with radius $R$.

Throughout, we fix $R\in (0, \pi/2]$ and a geodesic ball $B^n_R(p)$, which we shall refer to in abbreviated fashion as $B_R$.  Let $\Sigma^k \subset B_R$ be a minimal surface. Let $\nabla$ be the covariant derivative on $\Sph^n$ and $\nablasigma, \ddiv_{\Sigma}$, and $\Delta_{\Sigma}$ respectively be the covariant derivative, divergence, and Laplacian operators on $\Sigma$.  It is convenient  to define $\nabla^\Sigma  r^\perp := \nabla  r- \nabla^\Sigma r$; note that $ \left| \nabla^\Sigma r^\perp\right|^2=1-\left|\nabla^\Sigma r\right|^2$.

\begin{definition}
\label{DI}
Define a function $I_k\in C^\infty \left( [0, \pi]\right)$  by 
\begin{align*}
I_k(r) = \int_0^r w^{k-1}(s)\, d s.
\end{align*}
When the context is clear, we may omit the subscript $k$. 
\end{definition}

\begin{remark}
\label{Rvol}
Note that 
\begin{align*}
|B^k_r| = \int_{B^k_r} dV = \int_{0}^r \int_{\Sph^{k-1}}w^{k-1}(s) d\omega ds = \omega_{k-1} I_k(r), 
\end{align*}
where $\omega_{k-1} := \int_{\Sph^{k-1}} d \omega$ is the euclidean area of the unit $(k-1)$-sphere.
\end{remark}

Theorem  \ref{Tmain} follows from the following  general  argument which shifts the difficulty of  the  problem to the  construction of a vector field  with certain  properties.  

\begin{prop}
\label{Pproof}
Suppose for each $y\in  \partial B_R$, there exists a vector field $W$ on $B_R\setminus \{y\}$ satisfying:
\begin{enumerate}[label=\emph{(\roman*).}]
\item As $\dbold_y  \searrow0$,  $W  = - 2 I(R)\dbold^{1-k}_y \nabla  \dbold_y  +o \left( \dbold_y^{1-k}\right)$.
\item $W$  is tangent to $\partial B_R$ along $\partial B_R\setminus \{ y\} $.
\item  $\ddiv_\Sigma W\leq  1$  for any minimal surface $\Sigma^k \subset B_R$, with  equality only  if $\nabla^\Sigma \dbold_p   = \nabla  \dbold_p$  on $\Sigma$.
\end{enumerate}
Then the conclusion of \ref{Tmain} holds. 
\end{prop}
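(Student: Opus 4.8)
The plan is to run the divergence-theorem (calibration) argument in the spirit of \cite{Brendle:area}, feeding in the vector field $W$ supplied by the hypotheses. Let $\Sigma^k\subset B_R$ be a free boundary minimal surface; then $\Sigma$ is compact, smooth up to $\partial B_R$, and has $\partial\Sigma\subset\partial B_R$, so we may fix a point $y\in\partial\Sigma$ and let $W$ be the corresponding vector field on $B_R\setminus\{y\}$ satisfying (i)--(iii). For a.e.\ sufficiently small $\epsilon>0$ the number $\epsilon$ is a regular value of $\dbold_y|_\Sigma$; then $\Sigma_\epsilon:=\Sigma\cap\{\dbold_y\ge\epsilon\}$ is a compact manifold with corners, and $W$ is smooth near $\Sigma_\epsilon$ since $y\notin\Sigma_\epsilon$. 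The divergence theorem on $\Sigma_\epsilon$ then gives
\begin{align*}
\int_{\Sigma_\epsilon}\ddiv_\Sigma W\, dV \;=\; \int_{\partial\Sigma\cap\{\dbold_y\ge\epsilon\}}\langle W,\nu\rangle\, dA \;+\; \int_{\Sigma\cap\{\dbold_y=\epsilon\}}\langle W,\nu_\epsilon\rangle\, dA,
\end{align*}
where $\nu$ denotes the outward unit conormal of $\partial\Sigma$ in $\Sigma$ and $\nu_\epsilon=-\nabla^\Sigma\dbold_y/|\nabla^\Sigma\dbold_y|$ is the outward conormal of $\Sigma_\epsilon$ along the slice $\Sigma\cap\{\dbold_y=\epsilon\}$.

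First I would compute the two boundary integrals. Along $\partial\Sigma$ the free boundary condition forces $\nu$ to be the outward unit normal $\nabla\dbold_p$ of $\partial B_R$ in $\Sph^n$; since $W$ is tangent to $\partial B_R$ there by (ii), $\langle W,\nu\rangle\equiv0$ and the first integral vanishes. For the second integral I would use that $\Sigma$ is smooth up to the boundary with $y\in\partial\Sigma$: the rescalings of $\Sigma$ about $y$ at scale $\epsilon$ converge to a half $k$-plane, so the $(k-1)$-volume of the slice $\Sigma\cap\{\dbold_y=\epsilon\}$ equals $\tfrac12\,\omega_{k-1}\,\epsilon^{k-1}(1+o(1))$ and $|\nabla^\Sigma\dbold_y|\to1$ uniformly on it. Substituting expansion (i) and using $\langle\nabla\dbold_y,\nabla^\Sigma\dbold_y\rangle=|\nabla^\Sigma\dbold_y|^2$ gives $\langle W,\nu_\epsilon\rangle=2I(R)\,\epsilon^{1-k}|\nabla^\Sigma\dbold_y|+o(\epsilon^{1-k})$ on the slice; since the remainder in (i) is $o(\dbold_y^{1-k})$ uniformly, it contributes $o(1)$ after integration over a set of $(k-1)$-volume $O(\epsilon^{k-1})$. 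Hence the second integral tends to $2I(R)\cdot\tfrac12\,\omega_{k-1}=\omega_{k-1}I(R)=|B^k_R|$, by \ref{Rvol}.

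Combining these with (iii) gives, for all such $\epsilon$,
\begin{align*}
\int_{\Sigma_\epsilon}\ddiv_\Sigma W\, dV \;\le\; |\Sigma_\epsilon| \;\le\; |\Sigma|,
\end{align*}
and letting $\epsilon\searrow0$ yields $|B^k_R|\le|\Sigma|$. If equality holds, then $\int_{\Sigma_\epsilon}(1-\ddiv_\Sigma W)\,dV\to0$; since the integrand is nonnegative by (iii), this forces $\ddiv_\Sigma W\equiv1$ on $\Sigma\setminus\{y\}$, and the equality clause of (iii) then gives $\nabla^\Sigma\dbold_p=\nabla\dbold_p$ there, hence on all of $\Sigma$ by continuity. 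I would then deduce rigidity as follows: since $\nabla\dbold_p$ is everywhere tangent to $\Sigma$, the surface is ruled by the radial geodesics issuing from $p$, and by compactness $p\in\Sigma$; thus $\Sigma$ is the cone from $p$ over a $(k-1)$-submanifold $L$ of the unit sphere $\Sph^{n-1}\subset T_p\Sph^n$. Minimality of $\Sigma$ makes $L$ a minimal submanifold of $\Sph^{n-1}$, and smoothness of $\Sigma$ at the interior point $p$ forces its tangent cone there to be a $k$-plane, i.e.\ $L$ to be a totally geodesic $\Sph^{k-1}$; therefore $\Sigma=B^k_R(p)$, a totally geodesic geodesic ball of radius $R$.

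I expect the only genuinely non-routine steps to be (a) the local analysis at the puncture $y$ --- establishing that the rescalings of $\Sigma$ at $y$ converge to a half $k$-plane, so that the $(k-1)$-volume of $\Sigma\cap\{\dbold_y=\epsilon\}$ and the factor $|\nabla^\Sigma\dbold_y|$ behave as stated and the $o(\dbold_y^{1-k})$ remainder in (i) is negligible after integration --- and (b) the passage from $\nabla^\Sigma\dbold_p=\nabla\dbold_p$ on $\Sigma$ to the conclusion that $\Sigma$ is a totally geodesic ball. Both rest on standard facts (boundary regularity of free boundary minimal surfaces, and the structure of minimal cones over a point), but they are where something beyond bookkeeping with the divergence theorem is required; the inequality itself is a direct computation once the field $W$ is in hand.
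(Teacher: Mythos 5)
Your proposal is correct and follows essentially the same calibration/divergence-theorem argument as the paper: excise a small ball around $y$, use (ii) and the free boundary condition to kill the $\partial\Sigma$ term, use (i) together with the asymptotics $|\Sigma\cap\partial B_\varepsilon(y)|\sim\tfrac12\omega_{k-1}\varepsilon^{k-1}$ to evaluate the inner boundary term, and then pass to the limit. The only small point worth making explicit (which you elide, though the paper flags it) is that applying the divergence theorem to the possibly non-tangential field $W$ uses minimality of $\Sigma$, since $\ddiv_\Sigma W=\ddiv_\Sigma W^{\top}+\langle W,H\rangle$ and $H=0$; your additional detail on the slice asymptotics via blow-up and on rigidity via the cone structure at $p$ faithfully fleshes out what the paper states more tersely.
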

\begin{proof}
Fix $y\in \partial \Sigma$ and $W$ as above.  From the  divergence theorem, the minimality  of $\Sigma$, and (iii), 
\begin{align*}
\left| \Sigma \setminus B_{\varepsilon}(y) \right| \geq \int_{\Sigma \setminus B_{\varepsilon}(y)} \ddiv_\Sigma W 
= \int_{\partial \Sigma \setminus B_{\varepsilon}(y)} \langle W, \eta \rangle + \int_{\Sigma \cap \partial B_{\varepsilon}(y)} \langle W, \eta \rangle.
\end{align*}
By the free boundary condition, $\eta = \nabla  r$  on $\partial \Sigma$; using (ii) and letting $\varepsilon \searrow 0$, we find
\begin{align*}
|\Sigma| \geq \lim_{\varepsilon \searrow 0} \int_{\Sigma \cap \partial B_{\varepsilon}(y)} \langle W, \eta \rangle.
\end{align*}
On  $\Sigma \cap  \partial B_\varepsilon(y)$,  the free boundary condition implies $\eta  = - \nabla\dbold_y +o(1)$; in  combination with  (i) this implies $\langle W,  \eta \rangle =  2I(R)\varepsilon^{1-k} + o\left( \varepsilon^{1-k}\right)$ on  $\Sigma \cap \partial B_\varepsilon(y)$.  The free boundary condition also implies $|\Sigma \cap \partial B_\varepsilon(y)| =  \frac{\omega_{k-1}}{2}\varepsilon^{k-1}+o(\varepsilon^{k-1})$.  Taking $\varepsilon\searrow 0$, we conclude from the  preceding  that  $|\Sigma| \geq  \omega_{k-1} I(R) = |B^k_R|$, where the last equality follows from Remark \ref{Rvol}.

In the case of equality, (iii) implies that the integral curves in $\Sigma$  of $\nabla^\Sigma  \dbold_p$ are also integral curves in $\Sph^n$ of $\nabla \dbold_p$, namely they are parts of geodesics  passing through  $p$.  It follows that  $\Sigma$ is a geodesic $k$-ball of radius $R$ about $p$.
\end{proof}

\begin{definition}
\label{Dphi}
Define a function $\varphi$ on $[0, \pi)$ by $\varphi(t) = I(t) w^{1-k}(t)$, where we define $\varphi(0)  = 0$.  Given $p\in \Sph^n$, define vector fields $\Phi_p$ and $\Psi_p$ on respectively $\Sph^n\setminus \{-p\}$ and $\Sph^n \setminus \{ p\}$ by 
\begin{align*}
\Phi_p =\left( \varphi \circ \dbold_p\right) \nablabar \dbold_p, \qquad \Psi_p = \Phi_{-p}.
\end{align*}
\end{definition}

\begin{lemma}
\label{Lphidiv}
Given $p\in \Sph^n$, the following hold.
\begin{enumerate}[label=\emph{(\roman*).}]
\item $\ddiv_\Sigma \Phip \leq 1$ and $\ddiv_\Sigma \Psip\leq 1$.
\item $\Psip =\left( \psi \circ \dbold_p\right) \nabla \dbold_p$, where $\psi(r) :=  \frac{I(r) - I(\pi)}{\sin^{k-1} r}$.  If $k = 2j$ is even, moreover
\begin{align*}
\psi(r) =  \sum_{i=1}^{j} \frac{-a_i \sin r}{(1-\cos r)^i}, \quad 
a_1 := \frac{1}{2j-1}, \quad a_{i+1}  :=  \frac{2(j-i)}{2j-(i+1)} a_i, \quad  i=1, \dots, j-1.
\end{align*}
\end{enumerate}
\end{lemma}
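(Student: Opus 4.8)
The plan is to reduce both parts to a single computation for radial fields $V=(f\circ r)\,\nabla r$ with $r=\dbold_p$. Differentiating and using $\ddiv_\Sigma V=\sum_i\langle\nabla_{e_i}V,e_i\rangle$ for an orthonormal frame $\{e_i\}$ of $T\Sigma$, together with the warped product description \eqref{Ewarped} and the standard Hessian identity $\mathrm{Hess}\,r=\tfrac{w'}{w}\,(g-dr\otimes dr)$, one obtains
\begin{align*}
\ddiv_\Sigma V = f'(r)\,|\nablasigma r|^2 + f(r)\,\tfrac{w'(r)}{w(r)}\,\bigl(k-|\nablasigma r|^2\bigr).
\end{align*}

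For (i) I would substitute $f=\varphi$. From Definitions \ref{DI} and \ref{Dphi} one reads off the first order relation $\varphi'=1-(k-1)\tfrac{w'}{w}\varphi$; inserting it above and simplifying with $|\nablasigma r^\perp|^2=1-|\nablasigma r|^2$ collapses the expression to
\begin{align*}
\ddiv_\Sigma\Phi_p = |\nablasigma r|^2 + k\,\tfrac{w'(r)}{w(r)}\,\varphi(r)\,|\nablasigma r^\perp|^2 .
\end{align*}
Hence $\ddiv_\Sigma\Phi_p\le|\nablasigma r|^2+|\nablasigma r^\perp|^2=1$ will follow once $k\tfrac{w'}{w}\varphi\le1$ on $[0,\pi]$, i.e. $k\,w'(t)\,I(t)\le w(t)^k$. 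I would prove this by setting $G:=w^k-k\,w' I$ and noting $G(0)=0$ and $G'=-k\,w'' I=k\,w\,I\ge0$ on $[0,\pi]$ (using $w=\sin$, $w''=-w$, $I\ge0$), so $G\ge0$; this also covers the range $r>\pi/2$, where $w'<0$ and the bound is trivial. Finally $\Psi_p=\Phi_{-p}$, so the identical computation with $-p$ in place of $p$ gives $\ddiv_\Sigma\Psi_p\le1$.

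For (ii), since $\Psi_p=\Phi_{-p}=(\varphi\circ\dbold_{-p})\,\nabla\dbold_{-p}$ and $\dbold_{-p}=\pi-\dbold_p$ on $\Sph^n$, we get $\Psi_p=-\varphi(\pi-\dbold_p)\,\nabla\dbold_p$, so $\psi(r)=-\varphi(\pi-r)=-I(\pi-r)\sin^{1-k}r$; the substitution $s\mapsto\pi-s$ in Definition \ref{DI} gives $I(\pi-r)=I(\pi)-I(r)$, whence $\psi(r)=\tfrac{I(r)-I(\pi)}{\sin^{k-1}r}$. For the even case $k=2j$, let $P(r):=-\sum_{i=1}^j a_i\sin r\,(1-\cos r)^{-i}$ with $a_i$ as in the statement; since $\psi\sin^{k-1}r=I(r)-I(\pi)$, it suffices to show $P(r)\sin^{k-1}r=I(r)-I(\pi)$. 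The left side is smooth on $[0,\pi]$ and vanishes at $r=\pi$ (where $\sin r=0$, $1-\cos r=2$), so it is enough to verify $\tfrac{d}{dr}\bigl[P(r)\sin^{k-1}r\bigr]=\sin^{k-1}r$. Using $\sin^{2j}r\,(1-\cos r)^{-i}=(1-\cos r)^{j-i}(1+\cos r)^j$ gives $P(r)\sin^{k-1}r=-\sum_i a_i(1-\cos r)^{j-i}(1+\cos r)^j$; differentiating, cancelling a factor $\sin r\,(1+\cos r)^{j-1}$, and writing $1+\cos r=2-(1-\cos r)$, the claim reduces to the polynomial identity
\begin{align*}
\sum_{i=1}^j a_i\bigl[(2j-i)\,v^{\,j-i}-2(j-i)\,v^{\,j-i-1}\bigr]=v^{\,j-1},\qquad v:=1-\cos r,
\end{align*}
and comparing coefficients of $v^{j-1},v^{j-2},\dots,v^0$ yields precisely $a_1=\tfrac1{2j-1}$ and $a_{i+1}=\tfrac{2(j-i)}{2j-(i+1)}a_i$. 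Thus $\psi=P$, as asserted.

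I expect the only real difficulty to be bookkeeping: getting the signs right in the Hessian/divergence computation, checking that the scalar inequality $k w'\varphi/w\le1$ holds uniformly on all of $[0,\pi]$ (which is exactly what $G\ge0$ provides), and, in the even case, organizing the coefficient comparison in the displayed polynomial identity so that it transparently reproduces the stated normalization and recursion for the $a_i$.
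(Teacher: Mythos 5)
Your proof is correct and follows essentially the same route as the paper's: the identical radial divergence computation reduces (i) to the scalar inequality $k w' I \le w^k$, which your function $G$ (with $G'=-kw''I\ge 0$) establishes in exactly the way the paper writes as $\int_0^r I w''\,dt \le 0$, and (ii) is the same partial-fraction verification, merely phrased as an antiderivative check rather than as matching solutions of the ODE $(\sin r)y' + k(\cos r)y=-1$. If anything, fixing the constant by evaluating at the endpoint $r=\pi$, where $P(r)\sin^{k-1}r$ visibly vanishes, is cleaner than the paper's regularity-matching step.
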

\begin{proof}
In this proof, denote $r = \dbold_p$.  Take coordinates for $\Sph^n\setminus \{-p \}$ as in \eqref{Ewarped}.  As in the proof of \cite[Lemma 3]{ChoeCrelle},  $\nabla^2 r = (w'/w) \left( g - dr \otimes dr\right)$.   It follows that $\Delta_\Sigma r =( w'/w) (k- \left| \nabla^\Sigma r \right|^2)$.  Then
\begin{align*}
\ddiv_{\Sigma}\Phi_p &= \varphi' \left| \nabla^\Sigma r\right|^2 + \varphi \Delta_\Sigma r\\
&= \varphi'\left| \nabla^\Sigma r\right|^2 + \varphi \frac {w'}{w} ( k - \left| \nabla^\Sigma r\right|^2)\\
&= \varphi'+ (k-1)\varphi \frac{w'}{w} + \left( \varphi \frac{w'}{w} - \varphi'\right) \left|\nabla^\Sigma r^\perp\right|^2\\
&= 1+ w^{-k}\left({k I w' - w^k}\right) \left|\nabla^\Sigma r^\perp\right|^2\\
 &=1+ w^{-k} \left( k\int_{0}^r (I w')' \, dt - w^k\right) \left|\nabla^\Sigma r^\perp\right|^2\\
&=1+k w^{-k}\left( \int_0^r I w'' \,dt\right) \left|\nabla^\Sigma r^\perp\right|^2.
\end{align*}
(i) follows from this and Definition \ref{Dphi}.  For (ii), denote $r = \dbold_p$ and compute 
\begin{align*}
\Psi_p  
=- \varphi(\pi-r) \nablabar r
= \frac{-I(\pi - r)}{\sin^{k}(\pi - r)} \sin r \nabla r.
\end{align*}
The first equality follows by using that $I(r) + I(\pi - r) = I(\pi)$.  Next, note that the general solution to 
\begin{align}
\label{Ey}
(\sin r) y' + k (\cos r ) y=  -1
\end{align}
is $y(r) = (-I_{k}(r)+C) \csc^{k} r$.
 By the change of variable $t(r) = \cos r$, \eqref{Ey} is equivalent to
 \begin{align*}
 (t^2 -1) \frac{du}{dt} + k t u =-1.
 \end{align*}
Assume now $k = 2j$ is even.  Define $u(t) = \sum_{i=1}^j a_i (1-t)^{-i}$ where the $a_i$ are as in the statement of the lemma and compute
\begin{align*}
(t^2-1) \frac{du}{dt} +2jt u &= \sum_{i=1}^j \frac{i (t^2-1) a_i}{(1-t)^{i+1}} + \sum_{i=1}^j \frac{2j t a_i}{(1-t)^i}\\
&= \sum_{i=1}^j \frac{-i (1+t) a_i}{(1-t)^i} + \sum_{i=1}^j \frac{2j a_i}{(1-t)^i} - \sum_{i=1}^j \frac{2j a_i}{(1-t)^{i-1}}\\
&= \sum_{i=1}^j \frac{-2i a_i}{(1-t)^i} + \sum_{i=1}^j \frac{i a_i}{(1-t)^{i-1}}+ \frac{2ja_j}{(1-t)^j} + \sum_{i=1}^{j-1} \frac{2j(a_{i}-a_{i+1})}{(1-t)^i} -2ja_1\\
&=\sum_{i=1}^{j-1} \frac{-2i a_i}{(1-t)^i} + \sum_{i=1}^{j-1} \frac{(i+1)a_{i+1}}{(1-t)^{i}} +a_1+ \sum_{i=1}^{j-1}\frac{2j(a_i-a_{i+1})}{(1-t)^j} -2j a_1\\
&=  -1,
\end{align*}
where the last step uses the definition of the coefficients $a_i$. Since both $u$ and $I_{k}(r) \csc^{k}r$ are nonsingular at $r=0$,  the conclusion follows. 
 \end{proof}
 
 \begin{remark}
 \label{Rjsmall}
When $j=2$, $a_1 = a_2 = 1/3$; when $j=3$, $a_1 = a_2 = 1/5$ and $a_3 = 2/15$. 
 \end{remark}


Now fix $p\in \Sph^n$ and $x, y \in \partial B_R$, where $B_R := B_R(p)$.  Denote $r: = \dbold_y(x) = \dbold_x(y)$.

\begin{remark}
\label{Rhalf}
In the special case where $B^n_R$ is a hemisphere, the area bound $|\Sigma^k | \geq |B^k_R|$ is rather trivial -- one simply defines $W = \Psi_y$
and verifies that \ref{Pproof}.(i)-(iii) hold: (i) follows from \ref{Lphidiv}.(ii), (ii) follows from the definition of $\Psi_y$ and using that $\left\langle \nabla\dbold_y, \nabla \dbold_p\rangle \right|_{\partial B_R} =0$ since $B^n_R$ is a hemisphere, and (iii) follows from \ref{Lphidiv}.(i).  Note that the preceding holds for all $k$ and not just the values asserted in Theorem \ref{Tmain}.  The bound in this special case can also be anticipated from the following geometric heuristic.  Because $B_R$ is a hemisphere and $\Sigma$ meets $\partial B_R$ orthogonally, the union of $\Sigma$ and its reflection across $\partial B_R$ is a closed minimal submanifold $\hat{\Sigma}$ in $\Sph^n$.  Applying the euclidean monotonicity formula to the cone over $\hat{\Sigma}$ implies  $|\hat{\Sigma}| \geq |\Sph^k |$ and hence $|\Sigma^k | \geq |B^k_R|$ after dividing by two.
\end{remark}

By Remark \ref{Rhalf}, we may henceforth assume $R\in (0, \pi/2)$.  The more general definition of $W$ (see \ref{DWgen}; also \ref{dW} and \ref{D6}) reduces to $\Psi_y$ when $R= \pi/2$. 

\begin{lemma}
\label{Lpsiprod}
$\tan R \left. \langle \Psi_y, \nabla \dbold_p \rangle \right|_x = -\sum_{i=0}^{j-1} a_{i+1} (1-\cos r)^{-i}$.
\end{lemma}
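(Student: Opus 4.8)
The plan is to reduce the claimed identity to a single geometric computation — the value of $\langle \nabla\dbold_y, \nabla\dbold_p\rangle$ at the point $x$ — combined with the explicit formula for $\psi$ from Lemma \ref{Lphidiv}.(ii). By Definition \ref{Dphi}, $\Psi_y = (\psi\circ\dbold_y)\nabla\dbold_y$, so evaluating at $x$ and using $\dbold_y(x) = r$ gives $\langle\Psi_y,\nabla\dbold_p\rangle|_x = \psi(r)\,\langle\nabla\dbold_y,\nabla\dbold_p\rangle|_x$. Thus everything comes down to identifying $\langle\nabla\dbold_y,\nabla\dbold_p\rangle|_x$ and then simplifying.

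To compute the inner product I would work in the spherical triangle with vertices $p$, $x$, $y$. At $x$ the vector $\nabla\dbold_p$ is the unit tangent to the minimizing geodesic $[p,x]$ pointing away from $p$, and $\nabla\dbold_y$ is the unit tangent to $[y,x]$ pointing away from $y$; hence $\langle\nabla\dbold_y,\nabla\dbold_p\rangle|_x$ equals the cosine of the interior angle of the triangle at the vertex $x$. Since $x, y\in\partial B_R$, this triangle is isosceles with $\dbold_p(x) = \dbold_p(y) = R$ and $\dbold_x(y) = r$, so the spherical law of cosines — applied with the side of length $R = \dbold_p(y)$ opposite the vertex $x$ and the two adjacent sides of lengths $R$ and $r$ — gives
\[
\cos R = \cos R\cos r + \sin R\sin r\,\langle\nabla\dbold_y,\nabla\dbold_p\rangle|_x,
\]
and therefore $\langle\nabla\dbold_y,\nabla\dbold_p\rangle|_x = \dfrac{\cos R\,(1-\cos r)}{\sin R\,\sin r}$.

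Finally I would substitute the formula $\psi(r) = -\sum_{i=1}^{j}\dfrac{a_i\sin r}{(1-\cos r)^i}$ from Lemma \ref{Lphidiv}.(ii). The factor $\sin r$ in $\psi(r)$ cancels the $\sin r$ in the denominator above, and one power of $(1-\cos r)$ cancels as well, leaving
\[
\langle\Psi_y,\nabla\dbold_p\rangle|_x = -\frac{\cos R}{\sin R}\sum_{i=1}^{j}\frac{a_i}{(1-\cos r)^{i-1}};
\]
multiplying through by $\tan R$ and reindexing $i\mapsto i+1$ produces exactly $-\sum_{i=0}^{j-1}a_{i+1}(1-\cos r)^{-i}$, as claimed. (The computation is valid for $x\neq y$, so that $1-\cos r\neq 0$, which is the relevant range.)

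The only point requiring genuine care is the geometric identification in the second step: one must fix the orientations of $\nabla\dbold_y$ and $\nabla\dbold_p$ correctly so that their inner product is $+\cos$ of the interior angle at $x$ rather than its supplement, and one must pick the correct instance of the spherical law of cosines (the one solving for the angle at $x$). Once that is pinned down, the rest is a short and routine algebraic cancellation.
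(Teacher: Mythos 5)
Your proof is correct and is essentially the paper's own argument: the paper's proof consists precisely of combining Lemma \ref{Lphidiv}.(ii) with the spherical law of cosines at vertex $x$ of the triangle $pxy$, namely $\sin R \sin r \,\langle \nabla \dbold_y, \nabla \dbold_p\rangle|_x = \cos R\,(1-\cos r)$, which is exactly your second step. You have simply written out the substitution, cancellation, and reindexing that the paper leaves implicit.
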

\begin{proof}
Combine \ref{Lphidiv}.(ii) and the spherical law of cosines at vertex $x$ in the geodesic triangle $pxy$,\\
namely $\sin R \sin r \left. \langle \nabla \dbold_y, \nabla \dbold_p\rangle \right|_x = \cos R( 1- \cos r)$.
\end{proof}

\begin{remark}
\label{Rint}
The vector field $W$ \cite[Definition 2.11]{FM} used to prove the two-dimensional version of Theorem \ref{Tmain} is a linear combination of $\Phi_p$ and $\Psi_y$.  In dimension $k = 2j$, $j>1$, $\left. \langle \Psi_y, \nabla \dbold_p \rangle \right|_x$ depends on $r$ by Lemma \ref{Lpsiprod} and consequently no such linear combination satisfies \ref{Pproof}.(ii).  For this reason the definition of $W$ here is more involved. 
\end{remark}

\begin{notation}
\label{Ngamma}
Let $\gamma: [ R, \pi] \rightarrow \Sph^n$ be the minimizing geodesic from $y$ to $-p$ parametrized by arc length.  When there is no risk of confusion, we often write $\dbold_x$ in place of $\dbold_x \circ \gamma$.
\end{notation}

We now outline the proof of Theorem \ref{Tmain}. Our general aim is to construct from $\Psi_y$ a vector field $W$ satisfying Proposition \ref{Pproof}.(i)-(iii).  It turns out that the undesirable inner product property of Lemma \ref{Lpsiprod} can be ameliorated by adding to $\Psi_y$ a term of the form $\int_R^\pi h(s) \Psi_{\gamma(s)}\, ds$ so that the resulting vector field $Z$ has constant inner product with $\nabla \dbold_p$ along $\partial B^n_R \setminus \{ y\}$.  In \ref{DWgen} we define $W$ to be an appropriate linear combination of $\Phi_p$ and $Z$ so that \ref{Pproof}.(ii) holds.   \ref{Pproof}.(i) then follows in a straightforward way after observing (Lemma \ref{LZsing} below) that $\int_R^\pi h(s) \Psi_{\gamma(s)}\, ds$ is less singular than $\Psi_y$ at $y$.  On the other hand, verification of \ref{Pproof}.(iii) requires a detailed understanding of the function $h(s)$ which occupies most of the rest of the paper.  In particular, our argument requires us to prove that $h$ is nonnegative so we can conclude from \ref{Lphidiv}.(i) that $\ddiv_\Sigma Z \le 1 + \int_R^\pi h(s)\, ds$.

\begin{lemma}[The spherical law of cosines]
\label{Lsincos}
The following hold.
\begin{enumerate}[label=\emph{(\roman*).}]
\item $\displaystyle{
\sin R \sin \left(\dbold_x\circ \gamma \right) \left. \langle\nabla \dbold_{\gamma(s)}, \nabla \dbold_p \rangle\right|_x =  \cos s - \cos R \cos \left(\dbold_x\circ \gamma\right)}.$
 \item 
$\sin s \sin\left( \dbold_x\circ \gamma\right) \left. \left\langle \nabla \dbold_x, \nabla \dbold_p\right \rangle\right|_{\gamma(s)} = \cos R  - \cos s  \cos\left( \dbold_x \circ \gamma\right).$
\item $\sin s \frac{d}{ds}(1 -  \cos \left( \dbold_x \circ \gamma \right)) = \cos R -\cos s + \cos s(1- \cos\left( \dbold_x \circ \gamma\right))$.
\end{enumerate}
\end{lemma}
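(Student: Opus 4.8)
The three identities are all instances of the spherical law of cosines applied to the geodesic triangle with vertices $p$, $x$, and $\gamma(s)$, so I would begin by pinning down its side lengths. Since $\gamma$ is the minimizing geodesic from $y$ to $-p$ parametrized by arc length, $\dbold_{-p}\circ\gamma(s) = \pi - s$ and hence $\dbold_p\circ\gamma(s) = s$; the triangle therefore has side lengths $\dbold_p(x) = R$, $\dbold_p(\gamma(s)) = s$, and $\dbold_x(\gamma(s)) = \dbold_x\circ\gamma(s)$. I would also record the elementary fact that, for points $a\ne\pm b$ on $\Sph^n$, the unit tangent at $a$ to the minimizing geodesic toward $b$ equals $-\nabla\dbold_b|_a$; consequently the interior angle of the triangle at a vertex $q$ is precisely the angle between the gradients $\nabla\dbold$ of the two opposite vertices, evaluated at $q$.

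For (i) I would apply the law of cosines $\cos a = \cos b\cos c + \sin b\sin c\cos A$ at the vertex $x$, where the side of length $s$ is opposite $x$: this reads $\cos s = \cos R\cos(\dbold_x\circ\gamma) + \sin R\sin(\dbold_x\circ\gamma)\,\langle\nabla\dbold_{\gamma(s)},\nabla\dbold_p\rangle|_x$, which rearranges to the claimed identity. For (ii) I would instead apply the law of cosines at the vertex $\gamma(s)$, where the side of length $R$ is the opposite one, obtaining $\cos R = \cos s\cos(\dbold_x\circ\gamma) + \sin s\sin(\dbold_x\circ\gamma)\,\langle\nabla\dbold_x,\nabla\dbold_p\rangle|_{\gamma(s)}$, and rearranging.

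For (iii) I would first observe that $\gamma'(s) = \nabla\dbold_p|_{\gamma(s)}$: both are unit vectors, and $\langle\gamma'(s),\nabla\dbold_p|_{\gamma(s)}\rangle = \tfrac{d}{ds}(\dbold_p\circ\gamma)(s) = 1$ by the computation $\dbold_p\circ\gamma(s)=s$ above. The chain rule then gives $\tfrac{d}{ds}(\dbold_x\circ\gamma)(s) = \langle\nabla\dbold_x,\nabla\dbold_p\rangle|_{\gamma(s)}$, so that $\sin s\,\tfrac{d}{ds}\bigl(1-\cos(\dbold_x\circ\gamma)\bigr) = \sin s\sin(\dbold_x\circ\gamma)\,\langle\nabla\dbold_x,\nabla\dbold_p\rangle|_{\gamma(s)}$, which by part (ii) equals $\cos R - \cos s\cos(\dbold_x\circ\gamma)$. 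Comparing with the right-hand side of (iii), namely $\cos R - \cos s + \cos s\bigl(1-\cos(\dbold_x\circ\gamma)\bigr) = \cos R - \cos s\cos(\dbold_x\circ\gamma)$, completes the proof.

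Since each step is a direct application of standard spherical trigonometry, I do not expect a genuine obstacle here; the only points requiring care are the bookkeeping of orientations — that the relevant tangent directions are $-\nabla\dbold$ of the far vertices and that $\gamma' = \nabla\dbold_p$ along $\gamma$ — and the identification $\dbold_p\circ\gamma(s) = s$, both of which I would state explicitly at the outset.
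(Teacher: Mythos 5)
Your proposal is correct and follows the same approach as the paper, which simply cites the spherical law of cosines at vertices $x$ and $\gamma(s)$ for (i)–(ii) and notes that (iii) is a reformulation of (ii); you have filled in the bookkeeping (side lengths, identification $\dbold_p\circ\gamma(s)=s$, $\gamma'=\nabla\dbold_p$, and the angle–gradient dictionary) that the paper leaves implicit.
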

\begin{proof}
(i)-(ii) are the spherical law of cosines, applied to vertices $x$ and $\gamma(s)$ of the geodesic triangle $p x \gamma(s)$.  (iii) is just a reformulation of (ii).
\end{proof}

\begin{lemma}
\label{LZsing}
Suppose $u$ is a bounded integrable function on $[R, \pi]$.  Then  
\[ \int_R^\pi u(s) \Psi_{\gamma(s)}  \, ds  = o\left( \dbold_y^{1- k}\right) \quad 
\text{as} \quad 
\dbold_y \searrow 0.
\] 
\end{lemma}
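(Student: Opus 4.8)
The plan is to estimate the singular behavior of $\Psi_{\gamma(s)}$ at the point $y = \gamma(R)$ and show that the contribution to the integral from a neighborhood of the singularity is negligible compared with $\dbold_y^{1-k}$. First I would recall from Lemma \ref{Lphidiv}.(ii) that $\Psi_p = (\psi\circ\dbold_p)\nabla\dbold_p$ with $\psi(r) = (I(r)-I(\pi))\csc^{k-1}r$. Since $I(\pi) = \int_0^\pi w^{k-1}$ is a fixed positive constant and $I(r)$ is bounded on $[0,\pi]$, there is a constant $C$ with $|\psi(r)| \le C\,\csc^{k-1}r = C\,w^{1-k}(r)$ for $r$ bounded away from $\pi$ (and $\psi$ extends smoothly across $r=\pi$, so it is bounded there). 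Hence $|\Psi_{\gamma(s)}(x)| \le C\,\dbold_{\gamma(s)}(x)^{1-k}$ uniformly, away from the antipode of $\gamma(s)$, and in particular $\left| \int_R^\pi u(s)\Psi_{\gamma(s)}(x)\,ds\right| \le C\|u\|_\infty \int_R^\pi \dbold_{\gamma(s)}(x)^{1-k}\,ds$ for $x$ near $y$.

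Next I would bound $\int_R^\pi \dbold_{\gamma(s)}(x)^{1-k}\,ds$ as $x \to y$. Write $\rho := \dbold_y(x)$. The idea is that $\dbold_{\gamma(s)}(x)$ is comparable to $|s - R| + \rho$: for $s$ near $R$, the triangle inequality gives $\dbold_{\gamma(s)}(x) \ge |\,\dbold_y(x) - \dbold_y(\gamma(s))\,| = |\rho - (s-R)|$, but this degenerates when $s - R \approx \rho$, so instead I would use that $x$, $y$, $\gamma(s)$ form a geodesic triangle with the angle at $y$ bounded below (since $\partial B_R$ meets $\gamma$ transversally — indeed orthogonally is not needed, only transversality, which holds because $\gamma$ runs from $y$ radially and $x$ stays on $\partial B_R$); by the spherical law of cosines $\dbold_{\gamma(s)}(x)^2 \ge c\,((s-R)^2 + \rho^2)$ for some $c > 0$ and all $s$ in a fixed small interval $[R, R+\delta_0]$, $x$ in a fixed neighborhood of $y$. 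On the complementary interval $[R+\delta_0,\pi]$, $\dbold_{\gamma(s)}(x)$ is bounded below by a positive constant, so that part of the integral is bounded. Therefore
\[
\int_R^\pi \dbold_{\gamma(s)}(x)^{1-k}\,ds \le C + C\int_R^{R+\delta_0} \big((s-R)^2+\rho^2\big)^{(1-k)/2}\,ds.
\]
Substituting $s - R = \rho\,\sigma$ turns the remaining integral into $\rho^{2-k}\int_0^{\delta_0/\rho}(\sigma^2+1)^{(1-k)/2}\,d\sigma$, and since $k \ge 2$ (indeed $k$ is $4$ or $6$) the integrand decays like $\sigma^{1-k}$ with $1-k \le -3$, so the integral converges as $\rho \to 0$; hence the whole expression is $O(\rho^{2-k}) = O(\dbold_y^{2-k})$, which is $o(\dbold_y^{1-k})$.

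The main obstacle — really the only point requiring care — is establishing the uniform lower bound $\dbold_{\gamma(s)}(x)^2 \ge c\,((s-R)^2 + \rho^2)$ on a fixed neighborhood; this is where one uses that $\gamma$ leaves $\partial B_R$ transversally at $y$ so that the angle $\angle\, x\,y\,\gamma(s)$ stays in a compact subinterval of $(0,\pi)$ and the spherical law of cosines (as in Lemma \ref{Lsincos}) gives $1 - \cos\dbold_{\gamma(s)}(x) \ge c\,(1 - \cos(s-R)) + c\,(1-\cos\rho)$, from which the claimed quadratic bound follows by comparing $1-\cos t$ with $t^2$. Everything else is the routine change of variables and the observation that $\psi$ is bounded by a constant times $w^{1-k}$ away from $r=\pi$ and bounded near $r=\pi$, together with compactness of $[R+\delta_0,\pi]$ to handle the region where $x$ is far from $\gamma(s)$.
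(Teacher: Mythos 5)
Your proposal is correct and follows essentially the same route as the paper: bound $|\Psi_{\gamma(s)}|$ by a constant times $\dbold_{\gamma(s)}^{1-k}$ via Lemma \ref{Lphidiv}.(ii), and establish the comparability $\dbold_{\gamma(s)}(x)\gtrsim \dbold_y(x)+(s-R)$ from the geometry of the triangle $x\,y\,\gamma(s)$. The only difference is the last step, where the paper invokes dominated convergence to get $o\left(\dbold_y^{1-k}\right)$ without a rate, while you compute the integral explicitly and obtain the sharper $O\left(\dbold_y^{2-k}\right)$ (for $k\ge 3$), which is more than sufficient.
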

\begin{proof}
In this proof, denote $r = \dbold_x(y)$.  Using \ref{Lphidiv}.(ii), estimate
\begin{align*}
\left| \int_R^\pi  u(s) \Psi_{\gamma(s)} \, ds \right| &\le C \int_R^\pi \frac{1}{(\dbold_x\circ \gamma)^{k-1}}\, ds.
\end{align*}
There exists a constant $c>0$ such that along $\gamma$,
$\dbold_x \circ \gamma  > c ( r + s-R )$.
Therefore, 
\begin{align*}
\left| r^{k-1} \int_R^\pi  h(s) \Psi_{\gamma(s)} \, ds \right| 
&\le \frac{C}{c^{k-1}} \int_R^\pi \left( \frac{r}{r + (s-R)}\right)^{k-1} ds.
\end{align*}
The result now follows from the dominated convergence theorem. 
\end{proof}

\section{Constructing $Z$}
\label{S:Z}

\subsection*{The 4 dimensional case}

\begin{definition}
\label{dW}
Define a vector field $Z$ on $B_R \setminus \{ y\}$ by 
\begin{align*}
Z = \Psi_y+ \int_{R}^\pi h(s) \Psi_{\gamma(s)} \, ds, \quad
h(s) := \frac{\cos R}{\sin^2 R} \sin s.
\end{align*}
\end{definition}

\begin{lemma}
\label{L4Zprod}
$-3 \tan R \left. \left \langle Z, \nabla \dbold_p\right \rangle \right|_{\partial B_R} =  
1+ \int_R^\pi h(s)\, ds.$
\end{lemma}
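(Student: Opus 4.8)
The plan is to fix a point $x \in \partial B_R$, put $r = \dbold_x(y)$, and evaluate $\langle Z, \nabla\dbold_p\rangle$ at $x$ by adding up the contributions of the two terms in Definition \ref{dW}. The substance of the lemma is that, although each contribution individually depends on $r$ (equivalently, on the choice of $x$ along $\partial B_R$), their sum does not; so the task is to carry out the bookkeeping and watch the $r$-dependent pieces cancel.

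First I would handle the $\Psi_y$ term. By Lemma \ref{Lpsiprod} with $j = 2$ and $a_1 = a_2 = 1/3$ (Remark \ref{Rjsmall}),
\[
-3\tan R\,\langle\Psi_y,\nabla\dbold_p\rangle\big|_x = 1 + \frac{1}{1-\cos r}.
\]
For the integral term I would abbreviate, along $\gamma$, $\rho = \rho(s) := (\dbold_x\circ\gamma)(s)$ and $v = v(s) := 1 - \cos\rho$, so that $v(R) = 1-\cos r$ and $v(\pi) = 1+\cos R$ (the latter since $\gamma(\pi) = -p$ and $\dbold_x(p) = R$, hence $\rho(\pi) = \pi - R$). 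Using the explicit formula for $\psi$ from Lemma \ref{Lphidiv}.(ii) when $k = 4$, namely $\Psi_{\gamma(s)} = \psi(\rho)\,\nabla\dbold_{\gamma(s)}$ with $\psi(\rho) = -\frac{\sin\rho\,(2-\cos\rho)}{3(1-\cos\rho)^2} = -\frac{(1+v)\sin\rho}{3v^2}$, together with the law of cosines Lemma \ref{Lsincos}.(i) at the vertex $x$ to evaluate $\langle\nabla\dbold_{\gamma(s)},\nabla\dbold_p\rangle|_x = \frac{\cos s - \cos R\cos\rho}{\sin R\sin\rho}$, and with the identity $\tan R\,h(s) = \sin s/\sin R$, a short manipulation (using $\cos\rho = 1-v$) gives
\[
-3\tan R\,h(s)\,\langle\Psi_{\gamma(s)},\nabla\dbold_p\rangle\big|_x = \frac{\sin s\,(1+v)\,(\cos s - \cos R + v\cos R)}{\sin^2 R\; v^2}.
\]

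Now I would subtract the target right-hand side $1 + \int_R^\pi h(s)\,ds = 1 + \int_R^\pi \frac{\cos R\,\sin s}{\sin^2 R}\,ds$ from the sum of the two contributions. The constant $1$'s cancel, and after the elementary simplification
\[
\frac{(1+v)(\cos s - \cos R + v\cos R)}{v^2} - \cos R = \frac{(1+v)\cos s - \cos R}{v^2},
\]
the entire statement reduces to verifying
\[
\frac{1}{v(R)} + \frac{1}{\sin^2 R}\int_R^\pi \frac{\sin s\,\bigl((1+v)\cos s - \cos R\bigr)}{v^2}\,ds = 0.
\]
The crux of the argument --- and the reason $h$ is chosen exactly as in Definition \ref{dW} --- is that Lemma \ref{Lsincos}.(iii) amounts to $\sin^2 s\,v'(s) = \sin s\,(\cos R - \cos s + v\cos s)$, which turns the integrand into an exact derivative:
\[
\frac{d}{ds}\!\left(\frac{\sin^2 s}{v}\right) = \frac{2\sin s\cos s\,v - \sin^2 s\,v'}{v^2} = \frac{\sin s\,\bigl((1+v)\cos s - \cos R\bigr)}{v^2}.
\]
Hence the integral telescopes to $\bigl[\sin^2 s/v\bigr]_R^\pi = -\sin^2 R/v(R)$, since $\sin\pi = 0$ and $v(\pi) = 1+\cos R \ne 0$; the displayed sum is then $0$, and the lemma follows.

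All of these computations are routine. The two things that require care are keeping straight which form of the spherical law of cosines is used where --- part (i) of Lemma \ref{Lsincos} applied at the vertex $x$, and part (iii) used as a differential relation for $v$ along $\gamma$ --- and spotting the total-derivative structure of the integrand, which is precisely the feature that the choice of $h$ in Definition \ref{dW} is engineered to produce.
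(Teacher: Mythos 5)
Your proof is correct and follows essentially the same route as the paper: both evaluate the $\Psi_y$ term via Lemma \ref{Lpsiprod}, expand the integral term with Lemma \ref{Lsincos}.(i) and the explicit $\psi$, and close the argument by recognizing the integrand as the total derivative $\frac{d}{ds}\bigl(\sin^2 s/(1-\cos\dbold_x)\bigr)$ via Lemma \ref{Lsincos}.(iii). The only difference is organizational — you subtract the target and show the remainder vanishes, while the paper computes the integral directly — and all your intermediate identities check out.
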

\begin{proof}
Fix $x\in \partial B_R$.  Using \ref{Lsincos} and  \ref{Lphidiv}.(ii),
\begin{multline*}
-3\sin R\int_{R}^\pi \left.\langle \Psi_{\gamma(s)}, \nabla \dbold_p\rangle\right|_x \sin s \, ds  
=
 -3\sin R\int_{R}^\pi \psi(\dbold_x)\left. \langle \nabla \dbold_{\gamma(s)}, \nabla \dbold_p \rangle \right|_x \sin s \, ds \\
 \begin{aligned}
&=  \int_R^\pi \left( \frac{1}{(1- \cos \dbold_x)^2}+ \frac{1}{1-\cos \dbold_x} \right)\left( \cos s - \cos R + \cos R(1-\cos \dbold_x)\right) \sin s \, ds \\
&=\int_R^\pi \left( \frac{\cos s - \cos R}{(1-\cos \dbold_x)^2} + \frac{\cos s}{1-\cos \dbold_x} + \cos R \right) \sin s \, ds\\
&= \int_R^\pi -\frac{\sin^2 s \frac{d}{ds}( 1- \cos \dbold_x)}{(1-\cos \dbold_x)^2} + \frac{2 \cos s \sin s}{1-\cos \dbold_x} + \cos R \sin s \, ds\\
&=\int_R^\pi \frac{d}{ds} \left( \frac{\sin^2 s}{1- \cos \dbold_x}\right) + \cos R \sin s \, ds \\
&= -\frac{\sin^2 R}{1-\cos r} + \cos R\int_R^\pi \sin s\, ds,
\end{aligned}
\end{multline*}
where the fourth equality uses the following rearrangement of \ref{Lsincos}.(iii): 
\begin{align*}
\cos s - \cos R = - \sin s \frac{d}{ds} ( 1- \cos \dbold_x)+ \cos s (1-\cos \dbold_x).
\end{align*}

By Lemma \ref{Lpsiprod}, 
\begin{align*}
- 3 \tan R \left. \langle\Psi_{y}, \nabla \dbold_p \rangle\right|_x  = \frac{1}{1-\cos r} + 1.
\end{align*}
Combining these calculations with Definition \ref{dW} finishes the proof.
\end{proof}

\subsection*{The 6 dimensional case}
Before defining $Z$, we need to derive a system of first order linear equations which specifies $h(s)$ when supplied with appropriate initial values. 

\begin{lemma}
\label{Lsys6}
The equation
\begin{align*}
-15 \sin R \, \left. \left\langle \Psi_{\gamma(s)}, \nabla \dbold_p\right\rangle\right|_x h(s) -3\cos R \, h(s)  = \frac{d}{ds} \sum_{i=1}^{2} \frac{f_i(s)\sin ^4 s }{(1-\cos \dbold_x)^i}
\end{align*}
is equivalent to the conditions that $h(s) = f_2(s) \sin^3 s$ and $\fbold: = (f_1, f_2)$ solves the system
\begin{align}
\label{EA}
A \fbold =  \fbold', \quad 
A:= \frac{1}{\sin s} 
\left[\begin{array}{cc}-3 \cos s & 3 \cos s  \\ \cos R - \cos s  & \cos s-  \cos R\end{array}\right] .
\end{align}

\end{lemma}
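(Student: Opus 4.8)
The plan is to expand the right-hand side using the quotient/product rule and match powers of $(1-\cos \dbold_x)^{-1}$ against the left-hand side after rewriting $\langle \Psi_{\gamma(s)}, \nabla\dbold_p\rangle|_x$ via Lemma \ref{Lpsiprod}. First I would record that, by Lemma \ref{Lpsiprod} applied with $y$ replaced by $\gamma(s)$ (so $R$ becomes $s$ and $r$ becomes $\dbold_x\circ\gamma$) and using $j = 3$ together with Remark \ref{Rjsmall} (i.e.\ $a_1 = a_2 = 1/5$, $a_3 = 2/15$), one has
\begin{align*}
-15\sin s \,\langle \Psi_{\gamma(s)}, \nabla\dbold_p\rangle|_x = 15\cos s\left( \frac{1}{1-\cos\dbold_x} + \frac{1}{(1-\cos\dbold_x)^2} + \frac{2/3}{(1-\cos\dbold_x)^3}\right),
\end{align*}
but it is cleaner to keep it in the form $-15\sin s\sin(\dbold_x)\langle\nabla\dbold_{\gamma(s)},\nabla\dbold_p\rangle|_x = \bigl(\cos s - \cos R\cos\dbold_x\bigr)\bigl(\tfrac{1}{(1-\cos\dbold_x)^2}+\tfrac{1}{1-\cos\dbold_x}+\cdots\bigr)$ using Lemma \ref{Lphidiv}.(ii) directly, exactly as in the proof of Lemma \ref{L4Zprod} — so that Lemma \ref{Lsincos}.(i) can substitute $\cos s - \cos R\cos\dbold_x = \sin R\sin\dbold_x\langle\nabla\dbold_{\gamma(s)},\nabla\dbold_p\rangle|_x$ and cancel a $\sin R$. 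In other words: the whole identity should be manipulated so that $\dbold_x\circ\gamma$ enters only through $v := 1 - \cos(\dbold_x\circ\gamma)$, and the key kinematic input is Lemma \ref{Lsincos}.(iii), which reads $\sin s\, v' = \cos R - \cos s + \cos s\, v$, i.e.\ it expresses $v'$ (and hence $dv$) in terms of $v$ and $s$.

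Next I would compute the right-hand side. Writing $P_i := f_i(s)\sin^4 s$, we have
\begin{align*}
\frac{d}{ds}\sum_{i=1}^2 \frac{P_i}{v^i} = \sum_{i=1}^2 \frac{P_i'}{v^i} - \sum_{i=1}^2 \frac{i P_i\, v'}{v^{i+1}},
\end{align*}
and then I would eliminate $v'$ using $\sin s\, v' = (\cos R - \cos s) + (\cos s)\,v$, so that $\dfrac{v'}{v^{i+1}} = \dfrac{\cos R - \cos s}{\sin s}\,v^{-(i+1)} + \dfrac{\cos s}{\sin s}\,v^{-i}$. After substituting, the right-hand side becomes a Laurent polynomial in $v$ with coefficients that are explicit functions of $s$, $f_1, f_2, f_1', f_2'$; I collect the $v^{-1}$, $v^{-2}$, $v^{-3}$ coefficients. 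In parallel, the left-hand side $-15\sin R\langle\Psi_{\gamma(s)},\nabla\dbold_p\rangle|_x h(s) - 3\cos R\, h(s)$, after the substitution from Lemma \ref{Lsincos}.(i) and writing $\psi(\dbold_x) = -\sin\dbold_x\sum a_i v^{-i}$ (from Lemma \ref{Lphidiv}.(ii) with parameter $s$), also becomes a Laurent polynomial in $v$ whose coefficients involve $h(s)$, $\cos R$, $\cos s$. Matching the coefficient of each power $v^{-1}, v^{-2}, v^{-3}$ (and the constant term, which should vanish on both sides) yields a finite set of algebraic/differential relations; I expect the top-order ($v^{-3}$) and the constant-order balances to force $h(s) = f_2(s)\sin^3 s$, and the remaining two balances to collapse — after using this identification and dividing out common factors of $\sin s$ — precisely to the two scalar ODEs encoded by $A\fbold = \fbold'$ with the stated matrix $A$.

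The main obstacle is purely organizational: there are several powers of $v$ and of $\sin s$ floating around, and one must be careful that the ansatz $h(s) = f_2(s)\sin^3 s$ is \emph{derived} from the coefficient-matching rather than assumed — i.e.\ the "equivalent to" in the statement goes both ways, so I must check that the system \eqref{EA} plus $h = f_2\sin^3 s$ conversely implies the displayed differential identity, which amounts to running the same coefficient computation in reverse. I would present the forward direction (expand RHS, substitute Lemma \ref{Lsincos}.(iii), match powers) in detail and note that the converse is the same computation read backwards. One sanity check worth doing along the way: the coefficient matrix $A$ should be the $j = 3$ analogue of the (trivial, $1\times 1$) mechanism implicit in the $4$-dimensional case of Definition \ref{dW} and Lemma \ref{L4Zprod}, and the first row of $A$ (with entries $-3\cos s/\sin s$ and $3\cos s/\sin s$) reflects that $\tfrac{d}{ds}\sin^4 s = 4\cos s\sin^3 s$ is being balanced against the $v^{-1}$ terms, so the row sums and the $\cos R$ placement in the second row can be cross-checked against Lemma \ref{Lsincos}.(iii) termwise.
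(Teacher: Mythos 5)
Your plan is essentially the paper's proof: expand $-15\sin R\,\langle\Psi_{\gamma(s)},\nabla\dbold_p\rangle|_x$ via Lemma \ref{Lphidiv}.(ii) together with Lemma \ref{Lsincos}.(i) as a Laurent polynomial in $v=1-\cos(\dbold_x\circ\gamma)$, expand the $s$-derivative on the right and eliminate $v'$ using Lemma \ref{Lsincos}.(iii), and match coefficients of $v^{-1},v^{-2},v^{-3}$ (the constant terms cancel against $-3\cos R\,h$), with the $v^{-3}$ balance forcing $h=f_2\sin^3 s$ and the other two giving the rows of $A$. One caution: your opening display is incorrect --- Lemma \ref{Lpsiprod} cannot be applied with ``$R$ replaced by $s$'', since $x$ stays at distance $R$ from $p$ while $\gamma(s)$ is at distance $s$, so the triangle $px\gamma(s)$ is not isoceles (and your coefficients/powers are also off); the correct substitute is Lemma \ref{Lsincos}.(i), which is the route you then rightly take, so nothing downstream is affected.
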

\begin{proof}
Using Lemma \ref{Lphidiv}.(ii) and \ref{Lsincos}, 
\begin{equation}
\label{Ediff6}
\begin{aligned} 
-15 \sin R\left. \left\langle \Psi_{\gamma(s)} , \nabla \dbold_p\right\rangle\right|_{x} & = -15 \sin R\,  \psi(\dbold_x)\left. \left\langle \nabla \dbold_{\gamma(s)}, \nabla \dbold_p\right\rangle\right|_x\\ 
&=  \sum_{i=1}^3 \frac{b_i}{(1-\cos \dbold_x)^i}\left( \cos s - \cos R + \cos R(1-\cos \dbold_x)\right)\\
 &=  \sum_{i=1}^3 \frac{b_i(\cos s - \cos R)}{(1-\cos \dbold_x)^i } + \sum_{i=1}^3 \frac{b_i\cos R}{(1-\cos \dbold_x)^{i-1}}\\ 
 &= \frac{2(\cos s - \cos R)}{(1-\cos \dbold_x)^3} +  \frac{3 \cos s - \cos R}{(1-\cos \dbold_x)^2} +  \frac{3\cos s}{1-\cos \dbold_x}+ 3 \cos R,
\end{aligned}
\end{equation}
where $b_1 :=b_2 := 3,  b_3 := 2$,  recalling Remark \ref{Rjsmall}.
On the other hand, using \ref{Lsincos}.(iii), compute 
\begin{align*}
\frac{d}{ds} \sum_{i=1}^{2} \frac{f_i \sin^4 s}{(1-\cos \dbold_x)^i}&= 
 \sum_{i=1}^{2} \frac{(f_i \sin^4 s )'}{(1-\cos \dbold_x)^i} - \sum_{i=1}^{2} \frac{i f_i \sin^3 s}{(1-\cos \dbold_x)^{i+1}}(\cos R - \cos s + \cos s (1-\cos \dbold_x))\\
&=  \frac{2  (\cos s - \cos R)}{(1-\cos \dbold_x)^3}f_2 \sin^3 s + \frac{f'_2 \sin s + 2 f_2 \cos s - f_1  (\cos R- \cos s)}{(1-\cos \dbold_x)^2}\sin^3 s \\ 
&\mathrel{\phantom{=}}+ \frac{f'_1 \sin s + 3 f_1 \cos s}{1-\cos \dbold_x}\sin^3 s .
\end{align*}
The system \eqref{EA} follows from  multiplying \eqref{Ediff6} by $h(s)$ and matching coefficients above. 
\end{proof}

\begin{definition}
\label{D6}
Define a smooth vector field $Z$ on $B_R\setminus \{y \} $ by
\begin{equation*}
\begin{gathered}
 Z = \Psi_y + \int_R^\pi h(s) \Psi_{\gamma(s)} \, ds, 
 \end{gathered}
\end{equation*}
where $h(s) := f_2(s) \sin^3 s$ and $\fbold:=(f_1, f_2)$ is the solution of \eqref{EA} satisfying $\fbold(R) \sin^4(R) =\cos R (3, 2)$. 
\end{definition}

\begin{lemma} 
\label{L6Zprod}
$ -5\tan R  \left. \left\langle Z, \nabla \dbold_p\right \rangle\right|_{\partial B_R}=  1+  \int_R^\pi h(s)\, ds.$ 
\end{lemma}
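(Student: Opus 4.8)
The plan is to mimic the structure of the proof of Lemma~\ref{L4Zprod}, now using the system \eqref{EA} and the telescoping identity furnished by Lemma~\ref{Lsys6} in place of the single rearrangement of \ref{Lsincos}.(iii) used in the $4$-dimensional case. Fix $x\in\partial B_R$ and write $\dbold_x$ for $\dbold_x\circ\gamma$. First I would apply Lemma~\ref{Lsys6} with $\fbold=(f_1,f_2)$ the solution of \eqref{EA} from Definition~\ref{D6}: by construction $h(s)=f_2(s)\sin^3 s$, so the lemma gives the pointwise identity
\begin{align*}
-15\sin R\,\langle\Psi_{\gamma(s)},\nabla\dbold_p\rangle\big|_x\, h(s)-3\cos R\, h(s)=\frac{d}{ds}\sum_{i=1}^{2}\frac{f_i(s)\sin^4 s}{(1-\cos\dbold_x)^i}.
\end{align*}
Integrating this over $s\in[R,\pi]$, the right-hand side telescopes to the boundary evaluation $\big[\sum_{i=1}^2 f_i\sin^4 s(1-\cos\dbold_x)^{-i}\big]_{s=R}^{s=\pi}$, which I would evaluate using the initial condition $\fbold(R)\sin^4 R=\cos R\,(3,2)$ together with the fact that at $s=R$ one has $\dbold_x\circ\gamma(R)=\dbold_x(y)=r$, so the lower endpoint contributes $-\cos R\big(\tfrac{3}{1-\cos r}+\tfrac{2}{(1-\cos r)^2}\big)$; the upper endpoint at $s=\pi$ requires checking that $\sin^4 s$ vanishes fast enough against the blow-up of $(1-\cos\dbold_x)^{-i}$ as $\gamma(s)\to -p$, which it does since $\dbold_x$ stays bounded below there.

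Rearranging, this yields
\begin{align*}
-15\sin R\int_R^\pi\langle\Psi_{\gamma(s)},\nabla\dbold_p\rangle\big|_x\, h(s)\,ds=3\cos R\int_R^\pi h(s)\,ds-\cos R\left(\frac{3}{1-\cos r}+\frac{2}{(1-\cos r)^2}\right)+(\text{endpoint at }\pi),
\end{align*}
and I expect the $s=\pi$ endpoint to vanish. Dividing by $3\cos R$ and recalling $\tan R=\sin R/\cos R$ gives
\begin{align*}
-5\tan R\int_R^\pi\langle\Psi_{\gamma(s)},\nabla\dbold_p\rangle\big|_x\, h(s)\,ds=\int_R^\pi h(s)\,ds-\left(\frac{1}{1-\cos r}+\frac{2}{3(1-\cos r)^2}\right).
\end{align*}
Separately, Lemma~\ref{Lpsiprod} with $j=3$ (so $a_1=a_2=1/5$, $a_3=2/15$, by Remark~\ref{Rjsmall}) gives $\tan R\,\langle\Psi_y,\nabla\dbold_p\rangle|_x=-\sum_{i=0}^{2}a_{i+1}(1-\cos r)^{-i}=-1-\tfrac15(1-\cos r)^{-1}-\tfrac{2}{15}(1-\cos r)^{-2}$, hence $-5\tan R\,\langle\Psi_y,\nabla\dbold_p\rangle|_x=5+(1-\cos r)^{-1}+\tfrac{2}{3}(1-\cos r)^{-2}$. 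Wait --- this should read so that the singular terms cancel; I would recompute the normalization so that $-5\tan R\,\langle\Psi_y,\nabla\dbold_p\rangle|_x$ exactly supplies the $(1-\cos r)^{-1}$ and $(1-\cos r)^{-2}$ terms needed, leaving the constant $1$. Adding the two displays and invoking Definition~\ref{D6} ($Z=\Psi_y+\int_R^\pi h(s)\Psi_{\gamma(s)}\,ds$) then collapses everything to $-5\tan R\,\langle Z,\nabla\dbold_p\rangle|_{\partial B_R}=1+\int_R^\pi h(s)\,ds$, as claimed.

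\textbf{Main obstacle.} The routine part is the telescoping integration; the delicate point is bookkeeping the coefficients so the singular powers $(1-\cos r)^{-1}$ and $(1-\cos r)^{-2}$ produced by the lower endpoint of the telescoped sum cancel \emph{exactly} against those coming from $-5\tan R\langle\Psi_y,\nabla\dbold_p\rangle|_x$ via Lemma~\ref{Lpsiprod}; this cancellation is precisely what dictated the choice of the initial condition $\fbold(R)\sin^4 R=\cos R(3,2)$ in Definition~\ref{D6}, and verifying it is the one place where the computation must be done carefully rather than schematically. The only analytic subtlety is confirming the vanishing of the $s=\pi$ boundary term, which follows because $\dbold_x\circ\gamma$ is bounded away from $0$ near $s=\pi$ while $\sin^4 s\to 0$, so each summand $f_i(s)\sin^4 s(1-\cos\dbold_x)^{-i}\to 0$ there (using also that $f_i$ is continuous up to $s=\pi$, which is part of the assertion that $\fbold$ solves \eqref{EA} on $[R,\pi]$).
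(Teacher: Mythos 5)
Your proposal is correct and follows essentially the same route as the paper: integrate the identity of Lemma~\ref{Lsys6}, telescope the right-hand side, evaluate the lower endpoint via the initial condition in Definition~\ref{D6}, and cancel the resulting singular powers of $(1-\cos r)^{-1}$ against Lemma~\ref{Lpsiprod}. The only repairs needed are the constant you flagged ($-5\cdot(-a_1)=-5\cdot(-\tfrac15)=1$, not $5$, so the cancellation closes exactly as you suspected with no change to the normalization) and the justification of the vanishing $s=\pi$ endpoint: $f_i$ is in fact \emph{not} continuous at $s=\pi$ (the system \eqref{EA} is singular there, and Proposition~\ref{Pexp} shows $f_2\sim(\pi-s)^{\cos R-2}$), but $f_i(s)\sin^4 s\to 0$ as $s\nearrow\pi$, which is what the paper invokes and is all that is needed since $1-\cos\dbold_x$ stays bounded away from zero near $s=\pi$.
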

\begin{proof}
By Lemma \ref{Lpsiprod}, 
\begin{align*}
-15 \tan R\left. \left\langle \Psi_y, \nabla \dbold_p \right\rangle\right|_{x} &=  \frac{2}{(1-\cos r)^2} + \frac{3}{1-\cos r} +3.
\end{align*}
Then Lemma \ref{Lsys6} implies 
\begin{multline*}
-15 \sin R \int_R^\pi h(s)\left.\langle \Psi_{\gamma(s)}, \nabla \dbold_p \rangle \right|_x\, ds - 3\cos R \int_R^\pi h(s) \, ds
= \int_R^\pi \frac{d}{ds}\bigg(\sum_{j=1}^2 \frac{f_j(s) \sin^4 s}{(1-\cos \dbold_x)^j} \bigg)\, ds \\
=- \sum_{j=1}^2 \frac{f_j(R) \sin^4 R}{(1- \cos r)^j}  
=-\cos R \left(   \frac{2}{(1-\cos r)^2} + \frac{3}{1-\cos r}\right),
\end{multline*}
where the second equality uses that $\lim_{s \nearrow \pi} f_i(s)\sin^4 s = 0$, which follows either from standard ODE theory or the explicit formulae in Proposition \ref{Pexp}, and the last step follows from Definition \ref{D6}.  Combining these calculations proves the lemma. 
\end{proof}

\begin{lemma}
\label{Lhpos}
$h$ is nonnegative on $[R, \pi]$. 
\end{lemma}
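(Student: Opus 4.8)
The plan is to decouple the $2\times 2$ system \eqref{EA} into a single scalar equation by exploiting a constant left eigenvector of $A$, solve that equation explicitly, and then run a monotonicity argument for $f_2$. \emph{Step 1: reduction to a scalar ODE.} A direct computation shows that the row vector $(-1,1)$ is a left eigenvector of the matrix $A$ in \eqref{EA} for every $s$, namely $(-1,1)A = -\frac{2\cos s+\cos R}{\sin s}(-1,1)$. Hence, setting $v:=f_2-f_1$, it follows from \eqref{EA} that $v$ satisfies the scalar linear ODE $v' = -\frac{2\cos s+\cos R}{\sin s}\,v$ on $(0,\pi)$. From the initial condition in \ref{D6} one has $f_1(R)=3\cos R\csc^4 R$ and $f_2(R)=2\cos R\csc^4 R$, so $v(R)=-\cos R\csc^4 R<0$; integrating,
\[
v(s) = -\frac{\cos R}{\sin^2 R\,\sin^2 s}\left(\frac{\cot(s/2)}{\cot(R/2)}\right)^{\cos R},
\]
which is negative on all of $[R,\pi)$. (The same computation, carried one step further, yields the explicit formula for $\fbold$ of \ref{Pexp}, but only the sign of $v$ is needed here.)

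\emph{Step 2: monotonicity of $f_2$.} The second row of \eqref{EA} reads $f_2' = \frac{\cos s-\cos R}{\sin s}(f_2-f_1) = \frac{\cos s-\cos R}{\sin s}\,v$. For $s\in(R,\pi)$ we have $\cos s<\cos R$ and $\sin s>0$, so $\frac{\cos s-\cos R}{\sin s}<0$; combined with $v<0$ this gives $f_2'>0$ on $(R,\pi)$. Thus $f_2$ is increasing on $[R,\pi)$, and since $f_2(R)=2\cos R\csc^4 R>0$ — this is where $R<\pi/2$ enters — we conclude $f_2>0$, hence $h=f_2\sin^3 s>0$, on $[R,\pi)$.

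\emph{Step 3: the endpoint $s=\pi$.} Here $f_2$ is unbounded while $\sin^3 s\to 0$, so a brief asymptotic estimate is needed. Writing $f_2(s)=f_2(R)+\int_R^s\frac{\cos t-\cos R}{\sin t}\,v(t)\,dt$ and using the formula for $v$ above (with $\cot(s/2)=\tan((\pi-s)/2)\sim(\pi-s)/2$ near $\pi$), one gets $f_2(s)=O\big((\pi-s)^{\cos R-2}\big)$ as $s\nearrow\pi$, and therefore $h(s)=f_2(s)\sin^3 s=O\big((\pi-s)^{1+\cos R}\big)\to 0$. Hence $h$ extends continuously to $[R,\pi]$ with $h(\pi)=0$, and $h\ge 0$ on all of $[R,\pi]$.

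I do not expect a serious obstacle: the whole argument rests on the fortunate fact that $A$ admits a constant left eigenvector, which reduces the system to a scalar equation whose solution has a fixed sign, and the only delicate point is the unboundedness of $f_2$ at $s=\pi$, handled by the asymptotic estimate. What deserves emphasis is that this structure is special to small $j$: for the $3\times 3$ system that arises when $k=8$ there is no analogous decoupling, which is consistent with the failure of nonnegativity of $h$ recorded in the introduction.
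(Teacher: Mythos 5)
Your argument is correct and follows essentially the same route as the paper: both proofs reduce to showing that $f_1-f_2>0$ on $(R,\pi)$ and then read off $f_2'>0$ from the second equation of \eqref{EA}, using $R<\pi/2$ to get $f_2(R)>0$. The only real difference is that you obtain the sign of $v=f_2-f_1$ by integrating it explicitly via the constant left eigenvector (the same computation as Liouville's formula in Proposition \ref{Pexp}), whereas the paper observes that $(1,1)$ is a constant solution of \eqref{EA} and invokes uniqueness of ODE solutions; your extra care at the endpoint $s=\pi$ is welcome but not strictly needed, since positivity on $[R,\pi)$ passes to the closed interval by continuity of $h$.
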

\begin{proof}
It suffices to prove that $f_2$ is increasing on $(R, \pi)$.  From \eqref{EA}, $f_2$ satisfies the equation
\begin{align}
\label{Ef2}
(\sin s) f'_2 = (\cos R- \cos s) (f_1 - f_2). 
\end{align}
By inspection, the vector of constant functions $(1,1)$ solves \eqref{EA}.  
By the initial condition in \ref{D6},  $f_1(R)> f_2(R)$, so $\fbold$ is not a constant multiple of $(1,1)$.  Hence, by uniqueness of ODE solutions, $f_1-f_2>0$ on $(R, \pi)$.  Since $\cos R- \cos s>0 $ on $(R, \pi)$, it follows from \eqref{Ef2} that $f'_2> 0$ on $(R ,\pi)$.
\end{proof}

\subsection*{The general even dimensional case}
Assume $\Sigma$ has even dimension $k=2j$, where $j$ is at least $4$.

\begin{lemma}
\label{Lsystem}
The equation
\begin{align*}
 - \sin R \left. \left\langle \Psi_{\gamma(s)}, \nabla \dbold_p\right\rangle\right|_x h(s) - \frac{\cos R}{k-1} h(s)= \frac{d}{ds} \sum_{i=1}^{j-1} \frac{a_{i+1} f_i(s)\sin^{k-2} s }{(1-\cos \dbold_x)^i},
\end{align*}
is equivalent to a first order system $A \fbold =  \fbold'$, where $\fbold = (f_1, \dots, f_{j-1} )$.
\end{lemma}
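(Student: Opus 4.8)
The plan is to mimic the structure of Lemma \ref{Lsys6}, which handled the case $j=3$, and carry out the same bookkeeping in general. I would start from the explicit expansion of $-\sin R\,\langle\Psi_{\gamma(s)},\nabla\dbold_p\rangle|_x$. By Lemma \ref{Lphidiv}.(ii), $\Psi_{\gamma(s)} = (\psi\circ\dbold_x)\nabla\dbold_x$ with $\psi(r) = \sum_{i=1}^j \tfrac{-a_i\sin r}{(1-\cos r)^i}$, and by Lemma \ref{Lsincos}.(i), $\sin R\sin(\dbold_x)\langle\nabla\dbold_{\gamma(s)},\nabla\dbold_p\rangle|_x = \cos s - \cos R\cos(\dbold_x)$. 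Writing $\cos s - \cos R\cos(\dbold_x) = (\cos s - \cos R) + \cos R(1-\cos\dbold_x)$ exactly as in \eqref{Ediff6}, one obtains
\begin{align*}
-\sin R\,\langle\Psi_{\gamma(s)},\nabla\dbold_p\rangle|_x = \sum_{i=1}^{j} \frac{a_i(\cos s - \cos R)}{(1-\cos\dbold_x)^i} + \sum_{i=0}^{j-1}\frac{a_{i+1}\cos R}{(1-\cos\dbold_x)^i}.
\end{align*}

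\textbf{Key steps.} Next I would differentiate the proposed potential. Using Lemma \ref{Lsincos}.(iii), namely $\sin s\,\tfrac{d}{ds}(1-\cos\dbold_x) = (\cos R - \cos s) + \cos s(1-\cos\dbold_x)$, compute
\begin{align*}
\frac{d}{ds}\sum_{i=1}^{j-1}\frac{a_{i+1}f_i\sin^{k-2}s}{(1-\cos\dbold_x)^i} &= \sum_{i=1}^{j-1}\frac{a_{i+1}(f_i\sin^{k-2}s)'}{(1-\cos\dbold_x)^i} \\ &\quad - \sum_{i=1}^{j-1}\frac{i\,a_{i+1}f_i\sin^{k-3}s\big((\cos R - \cos s) + \cos s(1-\cos\dbold_x)\big)}{(1-\cos\dbold_x)^{i+1}}.
\end{align*}
Multiplying the expansion of $-\sin R\,\langle\Psi_{\gamma(s)},\nabla\dbold_p\rangle|_x$ by $h(s)$, subtracting $\tfrac{\cos R}{k-1}h(s) = a_1\cos R\, h(s)$ (which cancels the $i=0$ term $a_1\cos R$ in that expansion), and then matching coefficients of $(1-\cos\dbold_x)^{-i}$ for $i = 1,\dots,j$ between the two sides, one reads off: the highest-order coefficient $(1-\cos\dbold_x)^{-j}$ forces $h(s) = f_{j-1}(s)\sin^{k-3}s$ (up to the normalization baked into the coefficients $a_i$); and each remaining coefficient gives a first-order ODE expressing $f_i'$ as a combination of $f_i$, $f_{i+1}$ (and constants depending on $s$ through $\cos s$, $\sin s$), with the $i=1$ coefficient closing the system since there is no $f_0$. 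Assembling these yields $A\fbold = \fbold'$ with $A$ an explicit $(j-1)\times(j-1)$ matrix whose only nonzero entries lie on the diagonal and first superdiagonal (plus the structure forced by the recursion $a_{i+1} = \tfrac{2(j-i)}{2j-(i+1)}a_i$), exactly generalizing \eqref{EA}.

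\textbf{Main obstacle.} The routine part is the differentiation and the telescoping; the genuinely delicate point is the coefficient-matching at the top order, i.e.\ verifying that the $(1-\cos\dbold_x)^{-j}$ terms on both sides agree \emph{identically} (forcing $h = f_{j-1}\sin^{k-3}s$ with no residual constraint) and that no $(1-\cos\dbold_x)^{-(j+1)}$ term survives --- this is precisely where the recursion defining the $a_i$ is used, and it is the algebraic identity that made the $j=3$ computation in Lemma \ref{Lsys6} work out to $-1$ after matching. I expect this to require the same careful reindexing of sums (shifting $i\mapsto i-1$, isolating boundary terms at $i=1$ and $i=j$) as in the proof of Lemma \ref{Lphidiv}.(ii), and the claim "is equivalent to a first order system" should be read as: the stated differential identity holds if and only if $h = f_{j-1}\sin^{k-3}s$ and $\fbold$ satisfies the resulting $A\fbold=\fbold'$. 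Since the statement only asserts existence of such a system without pinning down $A$ explicitly, it suffices to exhibit the matching; the explicit form of $A$ (and the initial condition analogous to \ref{D6}) can be recorded separately.
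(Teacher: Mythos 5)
Your proposal follows the paper's proof essentially verbatim: expand $-\sin R\,\langle\Psi_{\gamma(s)},\nabla\dbold_p\rangle|_x$ via Lemma \ref{Lphidiv}.(ii) and Lemma \ref{Lsincos}, differentiate the potential using \ref{Lsincos}.(iii), cancel the constant term $a_1\cos R\,h$ against $-\tfrac{\cos R}{k-1}h$, and match coefficients of $(1-\cos\dbold_x)^{-i}$, with the top order $i=j$ forcing $h=(j-1)f_{j-1}\sin^{k-3}s$ and the orders $i=1,\dots,j-1$ yielding the linear system. One small correction to your description of the outcome: because $h=(j-1)f_{j-1}\sin^{k-3}s$ multiplies the \emph{entire} left-hand expansion, each equation expresses $f_i'$ in terms of $f_{i-1}$, $f_i$, and $f_{j-1}$ (not $f_i$ and $f_{i+1}$), so $A$ is lower bidiagonal plus a full last column rather than diagonal-plus-superdiagonal; this does not affect the validity of the lemma as stated.
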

\begin{proof}
One one hand, using Lemma \ref{Lphidiv}.(ii) and \ref{Lsincos},
\begin{align*} 
- \sin R \langle \Psi_{\gamma(s)} , \nabla \dbold_p\rangle & = -\sin R \, \psi(\dbold_x)\left.\left \langle \nabla \dbold_{\gamma(s)}, \nabla \dbold_p\right\rangle\right|_x\\ 
&= \sum_{i=1}^j \frac{a_i}{(1-\cos \dbold_x)^i}\left( \cos s - \cos R + \cos R(1-\cos \dbold_x)\right)\\ 
&= \sum_{i=1}^j \frac{a_i(\cos s - \cos R)}{(1-\cos \dbold_x)^i } + \sum_{i=1}^j \frac{a_i\cos R}{(1-\cos \dbold_x)^{i-1}}\\ 
&=  \frac{a_j (\cos s- \cos R)}{(1-\cos \dbold_x)^j}  + \sum_{i=1}^{j-1} \frac{a_i \cos s +(a_{i+1}-a_i) \cos R}{(1-\cos \dbold_x)^i} + a_1 \cos R\\
&= \frac{a_j (\cos s- \cos R)}{(1-\cos \dbold_x)^j}  + \sum_{i=1}^{j-1} \frac{a_i( \cos s +(1-i)c_i  \cos R)}{(1-\cos \dbold_x)^i} + a_1 \cos R,
\end{align*}
where the final step uses that $a_{i+1} - a_i  = \frac{1-i}{2j - (i+1)} a_i= (1-i)c_i a_i$ from the recursion formula in Lemma \ref{Lphidiv}.(ii).

On the other hand, using \ref{Lsincos}.(iii), compute 
\begin{multline*}
\frac{d}{ds} \sum_{i=1}^{j-1} \frac{a_{i+1} f_i \sin^{k-2} s}{(1-\cos \dbold_x)^i} =  \sum_{i=1}^{j-1} \frac{a_{i+1}(f_i \sin^{k-2} s )'}{(1-\cos \dbold_x)^i} - \sum_{i=1}^{j-1} \frac{i a_{i+1} f_i \sin^{k-3} s(\cos R - \cos s + \cos s (1-\cos \dbold_x))}{(1-\cos \dbold_x)^{i+1}}\\
\begin{aligned}
&= \frac{(j-1) a_{j} f_{j-1}\sin^{k-3} s (\cos s-\cos R)}{(1-\cos \dbold_x)^j} + \sum_{i=1}^{j-1} \frac{a_{i+1}  (f_i \sin^{k-2} s )'-i  a_{i+1}f_i \sin^{k-3} s\cos s }{(1-\cos \dbold_x)^i} \\ 
&\mathrel{\phantom{=}} -\sum_{i=2}^{j-1} \frac{(i-1) a_i f_{i-1}\sin^{k-3} s(\cos R -\cos s)}{(1-\cos \dbold_x)^{i}}\\
&= \frac{(j-1) a_{j} f_{j-1}\sin^{k-3} s (\cos s-\cos R)}{(1-\cos \dbold_x)^j}  \\
&\mathrel{\phantom{=}}+ \sum_{i=1}^{j-1}\sin^{k-3} s \frac{a_{i+1}(f'_i \sin s +(k-2-i)f_i \cos s) - a_i (i-1) f_{i-1} (\cos R - \cos s)}{(1-\cos \dbold_x)^i}. 
\end{aligned}
\end{multline*}

Matching coefficients on the terms over $(1-\cos \dbold_x)^j$ implies $h(s) = (j-1)f_{j-1} \sin^{k-3}s$.  Using this and matching coefficients in the other terms, we find for $i=1, \dots, j-1$
\begin{align*}
a_i\left( \cos s + (1-i)c_i \cos R\right) (j-1) f_{j-1} = a_{i+1}\left( f'_i \sin s + (k-2-i)f_i \cos s\right) - a_i (i-1)f_{i-1} (\cos R - \cos s).
\end{align*}
Solving each such equation for $f'_i$ establishes the system $A \fbold = \fbold'$ and completes the proof.
\end{proof}

\begin{definition}
\label{dZgen}
Define a vector field $Z$ on $B^n_R\setminus \{ y\}$ by .
\[ 
Z = \Psi_y+ \int_{R}^\pi h(s) \Psi_{\gamma(s)} \, ds,
\] 
where $h(s) = (j-1)f_{j-1}(s) \sin^{k-3}s$ and $\fbold$ is the solution of the system in \ref{Lsystem} satisfying $\fbold(R)\sin^{k-2}(R) = \cos R(1, \dots, 1)$.
\end{definition}

\section{Proof of Theorem \ref{Tmain}}
\label{S:Proof}
Assume now $k=2j$ and $j>1$, and let $Z$ be defined as in \ref{dW}, \ref{D6}, and \ref{dZgen}. 
\begin{definition}
\label{DWgen}
\begin{align*}
W = \frac{ \cos R \sin^{k-2}(R)}{(k-2) I_{k-2}(R)} \Phi_p + \frac{2I(R)}{I(\pi)} Z.
\end{align*}
\end{definition}

The following calculus identity (recall the notation of Definition \ref{DI}) will be useful:
\begin{align}
\label{Ecalculus}
I_{k}(R) =- \frac{1}{k-2} \cos R\sin^{k-2}(R) + \frac{k-2}{k-1} I_{k-2}(R).
\end{align}

\begin{remark}
\label{RWalt}
It will be useful to rewrite $W$  using \eqref{Ecalculus} as follows:
\begin{align*}
W = \left(1 -   \frac{k-1}{k-2}\frac{I(R)}{ I_{k-2}(R)} \right) \Phi_p + \frac{2I(R)}{I(\pi)} Z.
\end{align*}
\end{remark}

\begin{example}
\label{ExW}
When $k = 4$ and $k = 6$, calculations using \ref{DI} show that $W$ satisfies
\begin{align*}
W &= \cos R \cos^2(R/2)   \Phi_p +\frac{3}{2}I(R)  Z, \qquad (k=4)\\ 
W  &= \frac{3\cos^4(R/2)\cos R}{2+ \cos R} \Phi_p + \frac{15}{8} I(R)Z \qquad (k=6).
\end{align*}

\end{example}

\begin{lemma}[Constraint for $h$]
\label{Lch}
Let $Z$ and $h$ be as in Definition \ref{dW}, \ref{D6}, or \ref{dZgen}.  
\begin{enumerate}[label=\emph{(\roman*).}]
\item $\displaystyle{
1+ \int_R^\pi h(s)\, ds =\frac{k-1}{k-2} \frac{I(\pi)}{2} \frac{1}{I_{k-2}(R)} = \frac{I_{k-2}(\pi/2)}{I_{k-2}(R)}
}$.
\item $\displaystyle{\frac{2 I(R)}{I(\pi)} \left. \left \langle Z, \nabla \dbold_p \right \rangle\right|_{\partial B_R} =- \frac{\cot R}{k-2}\frac{I(R)}{I_{k-2}(R)}}.$
\end{enumerate}
\end{lemma}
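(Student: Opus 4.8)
The plan is to derive (i) and (ii) purely from the calculus identity \eqref{Ecalculus} together with the already-established $\langle Z, \nabla\dbold_p\rangle$ formulae (Lemmas \ref{L4Zprod}, \ref{L6Zprod}, and — in the general even case — the combination of \ref{Lsystem} with \ref{dZgen} carried out exactly as in the proof of \ref{L6Zprod}). First I would prove (i). The content of Lemmas \ref{L4Zprod} and \ref{L6Zprod} (and their general-$k$ analogue) is an identity of the shape
\[
-(k-1)\tan R\,\bigl\langle Z,\nabla\dbold_p\bigr\rangle\big|_{\partial B_R} \;=\; 1+\int_R^\pi h(s)\,ds,
\]
so in particular the right-hand side $1+\int_R^\pi h$ is already known to be a specific function of $R$; but more usefully, I would extract it directly. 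Tracing the proof of \ref{L6Zprod} (or \ref{L4Zprod}): after integrating the total-derivative term, the contribution of the integral term collapses to $-\sum_i a_{i+1}f_i(R)\sin^{k-2}(R)(1-\cos r)^{-i}$, which by the initial condition $\fbold(R)\sin^{k-2}(R)=\cos R(1,\dots,1)$ equals $-\cos R\sum_{i=1}^{j-1}a_{i+1}(1-\cos r)^{-i}$; adding the $\Psi_y$ contribution from Lemma \ref{Lpsiprod}, the $r$-dependent terms cancel by telescoping and one is left with $1+\int_R^\pi h(s)\,ds = -a_1\cos R/\tan R\cdot(k-1)\cdot(\text{something})$ — concretely the constant residue. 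The cleanest route is: the constant term in $-(k-1)\sin R\,\langle\Psi_{\gamma(s)},\nabla\dbold_p\rangle$ as computed in \ref{Lsystem} is $(k-1)a_1\cos R = \cos R$ (since $a_1 = 1/(2j-1) = 1/(k-1)$), and the constant term from $\Psi_y$ via \ref{Lpsiprod} is $-(k-1)a_1 = -1$... I would instead simply evaluate the known answer. Using $\psi(r) = (I(r)-I(\pi))/\sin^{k-1}r$ from \ref{Lphidiv}.(ii) and \ref{Lpsiprod}, one has $-(k-1)\tan R\,\langle\Psi_y,\nabla\dbold_p\rangle|_x = (k-1)\cos R\,\dfrac{I(\pi)-I(r)}{1-\cos r}\cdot\dfrac{\cos R(1-\cos r)}{\sin R\sin r}\cdot\dfrac{1}{\sin^{k-2}r}$... this is getting complicated; the honest plan is to use that the left side of (i) must be independent of $r$ (it is a function of $R$ alone), take the limit $r\to\pi$ (i.e. $x\to y$'s antipode along $\partial B_R$) or $r\to$ the value where terms simplify, and read off the constant. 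Then identify it with $\frac{k-1}{k-2}\frac{I(\pi)}{2I_{k-2}(R)}$ using $I(\pi)=2I(\pi/2)$ and the identity \eqref{Ecalculus} applied at $R=\pi/2$: namely $I_k(\pi/2) = -\frac{1}{k-2}\cdot 0\cdot\sin^{k-2}(\pi/2)+\frac{k-2}{k-1}I_{k-2}(\pi/2) = \frac{k-2}{k-1}I_{k-2}(\pi/2)$, which gives the second equality in (i) immediately.

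For (ii), I would combine (i) with the cosine-law identities. From the general form $-(k-1)\tan R\,\langle Z,\nabla\dbold_p\rangle|_{\partial B_R} = 1+\int_R^\pi h(s)\,ds$, which holds in all three cases (Lemmas \ref{L4Zprod}, \ref{L6Zprod}, and the $j\geq 4$ analogue obtained by repeating the proof of \ref{L6Zprod} with \ref{Lsystem} in place of \ref{Lsys6}), solve for $\langle Z,\nabla\dbold_p\rangle|_{\partial B_R} = -\frac{\cot R}{k-1}\bigl(1+\int_R^\pi h\bigr)$. Multiplying by $2I(R)/I(\pi)$ and substituting (i):
\[
\frac{2I(R)}{I(\pi)}\bigl\langle Z,\nabla\dbold_p\bigr\rangle\big|_{\partial B_R}
= -\frac{\cot R}{k-1}\cdot\frac{2I(R)}{I(\pi)}\cdot\frac{k-1}{k-2}\frac{I(\pi)}{2I_{k-2}(R)}
= -\frac{\cot R}{k-2}\frac{I(R)}{I_{k-2}(R)},
\]
which is exactly (ii); the $(k-1)$ and $I(\pi)$ factors cancel cleanly. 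So (ii) is a formal consequence of (i) once the uniform identity $-(k-1)\tan R\,\langle Z,\nabla\dbold_p\rangle = 1+\int_R^\pi h$ is in hand.

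The main obstacle is establishing that uniform identity in the general even case $j\geq 4$ — Lemmas \ref{L4Zprod} and \ref{L6Zprod} only cover $j=2,3$. This requires repeating the argument of \ref{L6Zprod}'s proof with the system of \ref{Lsystem}: after multiplying the identity of \ref{Lsystem} by $h(s)$ and integrating over $[R,\pi]$, the total-derivative term telescopes to $-\sum_{i=1}^{j-1}a_{i+1}f_i(R)\sin^{k-2}(R)(1-\cos r)^{-i}$ (using $\lim_{s\to\pi}f_i(s)\sin^{k-2}s = 0$ from ODE theory), then apply the initial condition and Lemma \ref{Lpsiprod} for the $\Psi_y$ piece, and check the $r$-dependent terms telescope away leaving precisely $1+\int_R^\pi h$; this is the computation that must be organized carefully, and the telescoping is exactly what the recursion $a_{i+1} = \frac{2(j-i)}{2j-(i+1)}a_i$ is designed to make work. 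Everything after that — the identification with volumes of $(k-2)$-balls and the algebra of (ii) — is routine manipulation of $I_k$ via \eqref{Ecalculus} and $I(\pi) = 2I(\pi/2)$.
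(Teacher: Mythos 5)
Your part (ii) is fine: given (i) and the uniform identity $-(k-1)\tan R\,\langle Z,\nabla\dbold_p\rangle|_{\partial B_R}=1+\int_R^\pi h$, the algebra you display is exactly how the paper gets (ii). Your verification of the second equality in (i) (via $I(\pi)=2I(\pi/2)$ and \eqref{Ecalculus} at $R=\pi/2$) is also correct.

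The genuine gap is the first equality in (i). The identity $-(k-1)\tan R\,\langle Z,\nabla\dbold_p\rangle|_x=1+\int_R^\pi h(s)\,ds$ is circular for your purposes: both sides contain $\int_R^\pi h$, since $\langle Z,\nabla\dbold_p\rangle|_x$ itself decomposes (after the telescoping in \ref{L4Zprod}/\ref{L6Zprod}) into an explicit $r$-dependent piece that cancels against the $\Psi_y$ contribution \emph{plus} a multiple of $\int_R^\pi h$. So ``evaluating at a convenient $x$'' or ``taking a limit in $r$ and reading off the constant'' returns the tautology $C=C$ and never produces the value $\frac{k-1}{k-2}\frac{I(\pi)}{2I_{k-2}(R)}$. (Also note $r=\dbold_y(x)$ only ranges over $(0,2R]$ for $x\in\partial B_R$, so $r\to\pi$ is not even available when $R<\pi/2$.) You need an \emph{independent} equation for $C:=1+\int_R^\pi h$. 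The paper gets one by applying the divergence theorem to $Z$ over the model surface $\Sigma=B^k_R(p)$, on which $\ddiv_\Sigma Z=C$ exactly and $\langle Z,\eta\rangle=-\frac{C\cot R}{k-1}$ on $\partial\Sigma$; the flux through $\Sigma\cap\partial B_\varepsilon(y)$ tends to $\frac{\omega_{k-1}}{2}I(\pi)$, which is independent of $C$, and this breaks the circularity, yielding $C\,\frac{k-2}{k-1}I_{k-2}(R)=\frac{I(\pi)}{2}$ after \eqref{Ecalculus}. (An alternative you did not pursue would be to integrate $h$ explicitly from \ref{dW} and \ref{Pexp}, which works for $k=4,6$ but does not generalize cleanly.) Without one of these inputs, (i) is unproved and (ii) inherits the gap.
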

\begin{proof}
In this proof, denote $C =  1+ \int_R^\pi h(s) \, ds$.  Let $\Sigma$ be a geodesic $k$-ball about $p$. 
As in the proof of \ref{Lsystem}, (see also \ref{L4Zprod} and \ref{L6Zprod}),
\begin{align*}
-\tan R \left.  \langle Z, \nabla \dbold_p\rangle\right|_{\partial B_R} = \frac{C}{k-1} \quad
\text{and} \quad
\ddiv_\Sigma Z = C. 
\end{align*}
Using the divergence theorem, we have for small $\varepsilon >0$
\begin{align*}
C \left| \Sigma \setminus B_{\varepsilon}(y) \right| = \int_{\Sigma \setminus B_{\varepsilon}(y)} \ddiv_\Sigma Z = 
\int_{\partial \Sigma \setminus B_\varepsilon(y)} \langle Z, \eta\rangle + \int_{\Sigma \cap \partial B_\varepsilon(y)} \langle Z , \eta \rangle.
\end{align*}
Letting $\varepsilon \searrow 0$, we find (recall Remark \ref{Rvol})
\begin{align*}
\omega C I(R)  = - \frac{\omega C}{k-1} \cos R \sin^{k-2}(R)  + \lim_{\varepsilon \searrow 0} \int_{\Sigma \cap \partial B_\varepsilon(y)} \langle Z , \eta \rangle.
\end{align*}

Arguing as in the proof of Proposition \ref{Pproof} and using \eqref{Ecalculus} to simplify, we find
\begin{align*}
C \frac{k-2}{k-1}I_{k-2}(R) = \frac{I(\pi)}{2}
\end{align*}
and the conclusion follows after simplifying and using \eqref{Ecalculus}.
\end{proof}

\begin{lemma}
When $k=4$ and $k=6$, $W$ satisfies  Proposition \ref{Pproof}.(i)-(iii).
\end{lemma}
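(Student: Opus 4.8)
The plan is to verify Proposition \ref{Pproof}.(i)--(iii) one condition at a time, treating $k=4$ and $k=6$ uniformly. Abbreviate $W = c_1 \Phi_p + c_2 Z$ with $c_1 = \frac{\cos R \sin^{k-2}(R)}{(k-2) I_{k-2}(R)}$ and $c_2 = \frac{2 I(R)}{I(\pi)}$; since $R \in (0,\pi/2)$, both are strictly positive. Recall $Z = \Psi_y + \int_R^\pi h(s) \Psi_{\gamma(s)}\, ds$ and note that $h \ge 0$ on $[R,\pi]$ --- directly from $h(s) = \frac{\cos R}{\sin^2 R}\sin s$ when $k=4$, and by Lemma \ref{Lhpos} when $k=6$ --- and that $h$ is bounded and integrable on $[R,\pi]$: this is clear for $k=4$, while for $k=6$ the solution $\fbold$ of \eqref{EA} is smooth on $[R,\pi)$, and the explicit formula of Proposition \ref{Pexp} (or a direct analysis of \eqref{EA} as $s\nearrow\pi$) shows that $h = f_2 \sin^3 s$ extends continuously to $\pi$ with $h(\pi)=0$.

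For condition (i), note that since $\dbold_p(y) = R \in (0,\pi)$ and $y \ne \pm p$, the field $\Phi_p = (\varphi\circ\dbold_p)\nabla\dbold_p$ is smooth near $y$, so $c_1\Phi_p = O(1) = o(\dbold_y^{1-k})$; the integral term of $Z$ is $o(\dbold_y^{1-k})$ by Lemma \ref{LZsing} with $u=h$; and by Lemma \ref{Lphidiv}.(ii), $\Psi_y = (\psi\circ\dbold_y)\nabla\dbold_y$ with $\psi(r) = \frac{I(r)-I(\pi)}{\sin^{k-1}r} = -I(\pi)\,r^{1-k} + o(r^{1-k})$ as $r\searrow 0$, whence $c_2\Psi_y = -2I(R)\dbold_y^{1-k}\nabla\dbold_y + o(\dbold_y^{1-k})$; summing the three contributions gives (i). For condition (ii), since $\nabla\dbold_p$ is normal to $\partial B_R$ it suffices to check $\langle W,\nabla\dbold_p\rangle = 0$ on $\partial B_R\setminus\{y\}$; on $\partial B_R$ one has $\langle\Phi_p,\nabla\dbold_p\rangle = \varphi(R) = I(R)\sin^{1-k}(R)$, so $c_1\langle\Phi_p,\nabla\dbold_p\rangle = \frac{\cot R}{k-2}\frac{I(R)}{I_{k-2}(R)}$, and by Lemma \ref{Lch}.(ii) this is precisely $-c_2\langle Z,\nabla\dbold_p\rangle|_{\partial B_R}$, so the two terms cancel.

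For condition (iii), interchange $\ddiv_\Sigma$ with the integral --- legitimate because at a fixed point of $\Sigma\setminus\{y\}$ the field $\Psi_{\gamma(s)}$ and its tangential derivatives are smooth and uniformly bounded in $s\in[R,\pi]$, the geodesic $\gamma([R,\pi])$ avoiding that point --- to obtain $\ddiv_\Sigma Z = \ddiv_\Sigma\Psi_y + \int_R^\pi h(s)\,\ddiv_\Sigma\Psi_{\gamma(s)}\,ds$. Since $h\ge 0$ and $\ddiv_\Sigma\Psi_q \le 1$ for every $q$ by Lemma \ref{Lphidiv}.(i), this is $\le 1 + \int_R^\pi h(s)\,ds$; combined with $\ddiv_\Sigma\Phi_p\le 1$ and $c_1,c_2>0$, we get $\ddiv_\Sigma W \le c_1 + c_2\bigl(1 + \int_R^\pi h(s)\,ds\bigr)$, which Lemma \ref{Lch}.(i) and the reduction identity \eqref{Ecalculus} evaluate to exactly $1$. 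For the equality statement, equality forces $\ddiv_\Sigma\Phi_p = 1$ a.e.\ on $\Sigma$; but the computation in the proof of Lemma \ref{Lphidiv} gives $\ddiv_\Sigma\Phi_p = 1 + k w^{-k}\bigl(\int_0^{\dbold_p} I w''\,dt\bigr)\left|\nabla^\Sigma\dbold_p^\perp\right|^2$, and since $w''=-\sin<0$ and $I>0$ on $(0,\pi)$ the correction term is strictly negative wherever $\dbold_p>0$ and $\nabla^\Sigma\dbold_p^\perp\ne 0$; as $\{\dbold_p=0\}$ is a single point, equality forces $\nabla^\Sigma\dbold_p = \nabla\dbold_p$ on all of $\Sigma$, which is what (iii) demands.

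I expect the argument to be largely assembly, since the genuinely hard inputs --- the positivity of $h$ (Lemma \ref{Lhpos}) and the identity of Lemma \ref{Lch}, together with Lemmas \ref{L4Zprod}/\ref{L6Zprod} behind them --- are already established. The two points needing real care are the boundedness and integrability of $h$ near $s=\pi$ in the $k=6$ case, which is what licenses Lemma \ref{LZsing}, and the algebra via \eqref{Ecalculus} that collapses $c_1 + c_2\bigl(1+\int_R^\pi h\,ds\bigr)$ to $1$ --- the latter being where the specific coefficient of $\Phi_p$ in Definition \ref{DWgen} is exactly tuned to make the divergence estimate sharp, and which I would therefore regard as the crux.
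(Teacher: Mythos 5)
Your proof is correct and follows essentially the same route as the paper's: Lemma \ref{Lphidiv}.(ii) plus Lemma \ref{LZsing} for (i), the cancellation via Lemma \ref{Lch}.(ii) for (ii), and the combination of $\ddiv_\Sigma \Phi_p \le 1$, $h \ge 0$, Lemma \ref{Lch}.(i) and \eqref{Ecalculus} for (iii). The only differences are that you supply some details the paper leaves implicit --- the justification for differentiating under the integral sign and the explicit treatment of the equality case via strict negativity of the correction term in $\ddiv_\Sigma \Phi_p$ --- both of which are sound.
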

\begin{proof}
It follows from \ref{Lphidiv}.(ii) that as $r: = \dbold_y \searrow 0$, 
\begin{align*}
\Psi_y =  - I(\pi) r^{1-k} \nabla r + o \left( r^{1-k}\right). 
\end{align*}
By Lemma \ref{LZsing}, the integral term in $Z$ contributes a singularity of order $o\left( r^{1-k}\right)$ as $r \searrow 0$.  (i) follows from combining these facts with the definition of $W$.  For (ii), compute using Definition \ref{Dphi} and Lemma \ref{Lch}.(ii)
\begin{align*}
\left.\left\langle W, \nabla \dbold_p\right\rangle\right|_{\partial B_R} &= \frac{\cos R \sin^{k-2}(R)}{(k-2)I_{k-2}(R)} \left. \left\langle \Phi_p , \nabla \dbold_p \right\rangle\right|_{\partial B_R} + \frac{2I(R)}{I(\pi)}\left.\left\langle Z, \nabla \dbold_p\right\rangle\right|_{\partial B_R} 
=0.
\end{align*}
For (iii), calculate using \ref{Lphidiv}.(i) and \ref{Lch}.(ii) and Remark \ref{RWalt}
\begin{align*}
\ddiv_\Sigma W &= \left(1 -  \frac{k-1}{k-2}\frac{I(R)}{ I_{k-2}(R)} \right)\ddiv_\Sigma  \Phi_p + \frac{2I(R)}{I(\pi)} \ddiv_\Sigma Z\\
&\leq1 -  \frac{k-1}{k-2}\frac{I(R)}{ I_{k-2}(R)} + \frac{k-1}{k-2}\frac{I(R)}{ I_{k-2}(R)}\\
&= 1, 
\end{align*}
where before applying Lemma \ref{Lch}.(ii) we have used that $\ddiv_\Sigma Z \leq 1+ \int_R^\pi h(s) \, ds$, which uses that $h\geq 0$ (via Definition \ref{dW} when $k=4$ and Lemma \ref{Lhpos} when $k=6$) in conjunction with Lemma \ref{Lphidiv}.(ii).
\end{proof}

\begin{remark}
\label{Rhpos}
To prove the generalization of Theorem \ref{Tmain} in dimension $k = 2j$, $j\geq 4$ using the method above, it would suffice to prove that $h$ (recall Definition \ref{dZgen}) is nonnegative on $[R, \pi)$.  When $k = 8$, numerical calculations suggest that $h$ is not strictly nonnegative for certain values of $R$ and the method appears to break down.
\end{remark}

\begin{remark}
\label{RPsi3}
When $k = 3$, calculations using Definition \ref{Dphi} and Lemma \ref{Lphidiv}.(ii) show that
\[ 
\psi(r) = \frac{ r - \sin r \cos r - \pi}{2 \sin^2 r} \quad \text{and} \quad \tan R\left. \langle \Psi_y, \nabla \dbold_p\rangle \right|_x = \frac{r- \sin r \cos r - \pi}{ 2 (1+\cos r)\sin r}.  
\] 
These expressions should be contrasted with their even dimensional counterparts, respectively the formula in the second part of \ref{Lphidiv}.(ii) and the statement of Lemma \ref{Lpsiprod}.  When $k=2j$ is even, $h$ is defined as in \ref{dW} , \ref{D6}, and \ref{dZgen} so that the $r$ dependent terms in $\left. \langle \Psi_y, \nabla \dbold_p \rangle \right|_x$ are cancelled after adding $ \langle \int_R^\pi h(s) \Psi_{\gamma(s)}ds, \nabla \dbold_p \rangle$.   It would be interesting to know to define $h$ when $k$ is odd to produce the analogous cancellation.
\end{remark}

\begin{appendices}
\section{}

The system \eqref{EA} can be solved explicitly, and we sketch the details below for completeness.

\begin{prop}
\label{Pexp}
A matrix of solutions for \eqref{EA} is 
\begin{align*}
\phibold(s) = \left[\begin{array}{cc}1 & (1- \cos R \cos s + \cos^2 s) \csc^2 s  \left( \cot \frac{s}{2}\right)^{\cos R} \\    1 &( \cos^2 s -\cos R \cos s - \frac{1}{3} \sin^2 R)\csc^2 s  \left( \cot \frac{s}{2}\right)^{\cos R}\end{array}\right].
\end{align*}
Moreover, $h$ (recall Definition \ref{D6}) satisfies
\begin{align*}
h(s) = \cunder \left\{ (1+2\csc^2 R) \sin^3 s + 
 ( \cos^2 s -\cos R \cos s - \frac{1}{3} \sin^2 R)\sin s  \left( \frac{\cot(s/2)}{\cot(R/2)}\right)^{\cos R}\right\},
\end{align*}
where $\cunder: =  (3\cos R \csc^2 R)/(1+3\sin^2 R)$.
\end{prop}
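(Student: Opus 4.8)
The plan is to solve \eqref{EA} explicitly by exhibiting a fundamental matrix and then isolating the particular solution singled out by the initial condition of Definition \ref{D6}.

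First I would observe that the constant vector $(1,1)$ solves \eqref{EA}, since $-3\cos s+3\cos s=0$ and $(\cos R-\cos s)+(\cos s-\cos R)=0$; this is the first column of $\phibold$. To produce a second, independent solution I would use reduction of order. Setting $d:=f_1-f_2$, the two scalar equations packaged in \eqref{EA} read $f_1'=-\tfrac{3\cos s}{\sin s}\,d$ and $f_2'=\tfrac{\cos R-\cos s}{\sin s}\,d$, hence
\begin{align*}
d'=f_1'-f_2'=-\frac{2\cos s+\cos R}{\sin s}\,d,
\end{align*}
a separable scalar equation whose solutions are constant multiples of $\csc^2 s\,(\cot(s/2))^{\cos R}$ (using $\int\csc s\,ds=\ln\tan(s/2)$, equivalently $\tfrac{d}{ds}\ln\cot(s/2)=-\csc s$). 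Feeding this $d$ back into $f_1'=-\tfrac{3\cos s}{\sin s}d$ and integrating --- the half-angle substitution $u=\cot(s/2)$ turns the resulting integral into a constant multiple of $\int(u^{\cos R+1}-u^{\cos R-3})\,du$ --- produces $f_1$ in closed form, and then $f_2=f_1-d$; choosing the integration constant to be $0$ and rewriting in terms of $s$ recovers exactly the second column of $\phibold$. Its linear independence from $(1,1)$ is immediate since $\det\phibold=-(1+\tfrac13\sin^2 R)\csc^2 s\,(\cot(s/2))^{\cos R}\neq0$ on $(R,\pi)$. For a written proof it may be cleanest to reverse the logic: take the displayed $\phibold$ as given and verify $\phibold'=A\phibold$ by direct differentiation, the only nontrivial input being $\tfrac{d}{ds}\big[\csc^2 s\,(\cot(s/2))^{\cos R}\big]=-\tfrac{2\cos s+\cos R}{\sin s}\csc^2 s\,(\cot(s/2))^{\cos R}$ together with elementary trigonometric identities.

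To extract $h$, recall $h(s)=f_2(s)\sin^3 s$, where by Definition \ref{D6} the relevant $\fbold$ solves \eqref{EA} with $\fbold(R)\sin^4 R=\cos R\,(3,2)$. Writing $\fbold=c_1\,(1,1)+c_2\,\mathbf{v}$ with $\mathbf{v}$ the second column of $\phibold$, the entries collapse at $s=R$ to $\mathbf{v}_1(R)=\csc^2 R\,(\cot(R/2))^{\cos R}$ and $\mathbf{v}_2(R)=-\tfrac13(\cot(R/2))^{\cos R}$, so $\fbold(R)\sin^4 R=\cos R\,(3,2)$ becomes a $2\times2$ linear system for $(c_1,c_2)$, solved by inspection. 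Substituting $(c_1,c_2)$ into $f_2=c_1+c_2\,\mathbf{v}_2$, multiplying by $\sin^3 s$, and using $\mathbf{v}_2(s)\sin^3 s=(\cos^2 s-\cos R\cos s-\tfrac13\sin^2 R)\sin s\,(\cot(s/2))^{\cos R}$ yields the asserted formula for $h$ after routine algebra. The same formulas show $\sin^4 s\,\mathbf{v}_i(s)=\sin^2 s\,(\cdots)(\cot(s/2))^{\cos R}\to0$ as $s\nearrow\pi$ (since $0<\cos R<1$), which is the limit invoked in the proof of Lemma \ref{L6Zprod}.

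The main obstacle is bookkeeping rather than insight: carrying the explicit antiderivative of $f_1'$ back through the substitution $u=\cot(s/2)$ and compressing the outcome into the tidy closed form displayed for $\phibold$, and then tracking $c_1$ and $c_2$ so that $h$ comes out in exactly the stated shape. If one adopts the verification route for the fundamental matrix instead, there is essentially no obstacle at all, which is presumably why the authors present the argument only in sketch form ``for completeness.''
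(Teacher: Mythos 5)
Your proposal is correct and follows essentially the same route as the paper: the authors likewise start from the constant solution $(1,1)$, derive the scalar ODE $d'=-\frac{2\cos s+\cos R}{\sin s}\,d$ for the difference (phrased there via Liouville's formula for $\det\phibold=f_2-f_1$), integrate $f_1'=3C\cos s\,\csc^3 s\,(\cot(s/2))^{\cos R}$ in closed form, and fix the two coefficients from the initial condition in Definition \ref{D6}. One caveat for your final ``routine algebra'' step: matching $\fbold(R)\sin^4 R=\cos R\,(3,2)$ yields $\cunder=3\cos R\csc^2 R/(3+\sin^2 R)$ rather than the $3\cos R\csc^2 R/(1+3\sin^2 R)$ printed in the statement --- the printed constant fails the sanity check $h(R)=f_2(R)\sin^3 R=2\cot R$ --- so be aware that you will not literally reproduce the displayed $\cunder$, which appears to be a typo.
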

\begin{proof}
Observe that $\fbold = (1,1)$ solves $\eqref{EA}$, and let $\phibold = ( (1, 1), (f_1, f_2))$ be a matrix of solutions, where $f_1$ and $f_2$ are to be determined.  
Liouville's formula implies
\begin{align}
\label{Eliouville1}
\det(  \phibold(s) ) = f_2 - f_1 = C \exp\left(\int \trace A(s) ds\right) = C \csc^2 s \left( \cot \frac{s}{2}\right)^{\cos R}.
\end{align}

Solving for $f_2$ in \eqref{Eliouville1} and substituting into the first item of \eqref{EA} implies
\begin{align*}
f_1' = 3 C \cos s \csc^3 s \left( \cot \frac{s}{2}\right)^{\cos R},
\end{align*}
which after integrating gives a solution $f_1$ of the form
\begin{align*}
f_1 &= \frac{-3C}{3+\sin^2 R}\csc^2 s \left( 1-  \cos R \cos s + \cos^2s \right)  \left( \cot \frac{s}{2}\right)^{\cos R}.
\end{align*}
Taking  $C = - 1-\frac{1}{3}\sin^2 R$ and substituting back into \eqref{Eliouville1} solves for $f_2$ and completes the proof of the formula for $\phibold$.  Next, define $\fbold = (f_1, f_2)$ by 
 \begin{align}
 \label{Efb}
\fbold = \cunder\,  \phibold \cdot \left[ \begin{array}{c}1+ 2\csc^2 R \\ ( \tan \frac{R}{2})^{\cos R}\end{array}\right], 
\end{align}
where $\cunder$ is as in the statement of the proposition.  A straightforward but omitted calculation shows that this solution of \eqref{EA} satisfies the initial values in Definition \ref{D6}. 
\end{proof}

\begin{remark}
The integral of $h$ may be computed directly, using that
\begin{align*} 
\int ( \cos^2 s -\cos R \cos s - \frac{1}{3} \sin^2 R)\sin s  \left( \cot \frac{s}{2}\right)^{\cos R}\, ds = 
-\frac{1}{3}(\cos R - \cos s )\sin^2 s  \left( \cot \frac{s}{2}\right)^{\cos R}.
\end{align*}
\end{remark}

Finally, by taking $R\searrow 0$ in combination with a rescaling argument, we show below that the proof of Theorem \ref{Tmain} recovers the Euclidean area bounds in \cite[Theorem 4]{Brendle:area} in dimensions $k=2, 4$ and $6$.

\begin{definition}
\label{Dexp}
Given $R\in (0, \pi)$, define $\Rcap : T_p \Sph^n \rightarrow T_p \Sph^n$ by $\Rcap v =  R v$, a magnified metric $\gtilde = \gtilde[R]$ on $B_R \subset \Sph^n$  and a metric $g_R$ on $B_1(0) \subset T_p \Sph^n$ by 
\[ 
\gtilde = R^{-2}g, \qquad
g_R : = \left( \exp_p \circ\,  \Rcap \right)^* \gtilde.
\] 
Denote by $\nablatilde$ and $\dboldtilde$ the Levi-Civita connection and the distance function induced by the metric $\gtilde$. 
\end{definition}
By Definition \ref{Dexp}, $\exp_p \circ \, \Rcap: (B_1(0), g_R) \rightarrow (B_R(p), \gtilde)$ is an isometry which we use to identify the two spaces.  Note that as $R\searrow 0$, $g_R$ converges smoothly to the euclidean metric $\left. g\right|_p$.  Using the identification above, we shall abuse notation by referring to $y$ both as a point on $\partial B_1(0)$ as well as a point on $\partial B_R(p) \subset \Sph^n$.

\begin{prop}[Euclidean asymptotics] As $R\searrow 0$, the vector fields $W$ in \cite[Definition 2.11]{FM} when $k=2$ and in \ref{DWgen} when $k =4, 6$ converge smoothly to fields $W_0$ on the euclidean ball $(B: =B_1(0), \left. g\right|_p)$ (using the notation above) satisfying
\label{Peuclid}
\begin{enumerate}[label=\emph{(\roman*).}]
\item $W_0 = -\frac{2}{k} \dbold^{1-k}_y \nabla \dbold_y$ as $\dbold_y \searrow 0$. 
\item $W_0$  is tangent to $\partial B$ along $\partial B\setminus \{ y\} $.
\item  $\ddiv_\Sigma W_0\leq  1$  for any minimal surface $\Sigma^k \subset B$, with  equality only  if $\nabla^\Sigma \dbold_p   = \nabla  \dbold_p$  on $\Sigma$.
\end{enumerate}
With these conditions, an appropriately modified version of Proposition \ref{Pproof} implies the area bounds in the euclidean setting (see the proof of \cite[Theorem 4]{Brendle:area}).
\end{prop}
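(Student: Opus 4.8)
The plan is to establish the three asymptotic properties of $W_0$ by tracking how each ingredient in $W$ behaves under the rescaling $\gtilde = R^{-2} g$ as $R \searrow 0$, and then invoking a rescaled version of Proposition \ref{Pproof}. First I would record the basic scaling facts: under $\gtilde = R^{-2}g$ the distance function scales as $\dboldtilde_p = R^{-1} \dbold_p$, so $w(r) = \sin r$ becomes $R^{-1}\sin(R\tilde r) \to \tilde r$ and hence $I_k(r) = \int_0^r w^{k-1}$ rescales to $\int_0^{\tilde r} \big(R^{-1}\sin(Rt)\big)^{k-1}\,dt \to \tilde r^{\,k}/k$; in particular $I(R)$ (in the rescaled picture $I_k$ evaluated at rescaled radius $1$) tends to $1/k$ and $I(\pi) = 2I_k(\pi/2)$ tends to a finite constant. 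Likewise the function $\varphi(t) = I(t) w^{1-k}(t)$ converges to the Euclidean $\tilde t/k$, so $\Phi_p$ converges smoothly to the Euclidean radial field $\frac{\dbold_p}{k}\nabla\dbold_p$, and $\Psi_y = \Phi_{-y}$, whose profile $\psi(r) = (I(r)-I(\pi))\csc^{k-1}r$ has the singular leading term $-I(\pi)\,r^{1-k}$, rescales so that the $-I(\pi)$ coefficient stays bounded while the field near $y$ behaves like $-I(\pi)\,\dbold_y^{1-k}\nabla\dbold_y$ plus lower-order terms.

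Next I would handle the integral term $\int_R^\pi h(s)\Psi_{\gamma(s)}\,ds$. The key observation is that Lemma \ref{LZsing} already shows this term is $o(\dbold_y^{1-k})$ near $y$ uniformly, so it contributes nothing to the leading singularity of $W_0$; for the smooth convergence away from $y$ one checks that $h(s)\,ds$, after the change of variables $s = R\sigma$ reflecting the rescaled geodesic $\gamma$, converges to the Euclidean weight. Here I would appeal to the explicit formulae: when $k=2$ the field is the linear combination from \cite[Definition 2.11]{FM}; when $k=4$, $h(s) = \frac{\cos R}{\sin^2 R}\sin s$, and after rescaling $s = R\sigma$ this behaves like $\sigma\,d\sigma$, which is exactly $s^{k-3}\,ds$ in the Euclidean normalization of \cite[equation (1)]{Hitotumatu}; when $k=6$ one uses the explicit $h$ from Proposition \ref{Pexp}, whose limit as $R\searrow 0$ is likewise proportional to $s^3\,ds = s^{k-3}\,ds$. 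Thus in each case the rescaled $Z$ converges smoothly on compact subsets of $B\setminus\{y\}$ to Brendle's singular field, and the constant $\frac{2I(R)}{I(\pi)}$ multiplying $Z$ in Definition \ref{DWgen} converges while the coefficient $1 - \frac{k-1}{k-2}\frac{I(R)}{I_{k-2}(R)}$ of $\Phi_p$ in Remark \ref{RWalt} converges to $1 - \frac{k-1}{k-2}\cdot\frac{1/k}{1/(k-2)} = 1 - \frac{k-1}{k} = \frac 1k$, which is precisely the coefficient of the Euclidean radial field. Assembling, $W$ converges smoothly to a field $W_0$ on $(B, g|_p)$, and property (i) follows by matching the leading singularity: the $\Phi_p$ part is regular at $y$ and the $Z$ part contributes $-\frac{2I(R)}{I(\pi)}I(\pi)\dbold_y^{1-k}\nabla\dbold_y = -2I(R)\dbold_y^{1-k}\nabla\dbold_y \to -\frac 2k \dbold_y^{1-k}\nabla\dbold_y$.

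Properties (ii) and (iii) then follow by taking limits in the corresponding statements for $W$. Tangency along $\partial B_R$ is the identity $\langle W, \nabla\dbold_p\rangle|_{\partial B_R} = 0$ proved in the lemma preceding Remark \ref{Rhpos} (and its $k=2$ analogue in \cite{FM}); since $\nabla\dbold_p$ and $\partial B_R$ converge to their Euclidean counterparts $\nabla\dbold_p$ and $\partial B$, the limit gives $\langle W_0, \nabla\dbold_p\rangle|_{\partial B} = 0$, i.e.\ (ii). For (iii), the inequality $\ddiv_\Sigma W \le 1$ holds for every $R$ with the stated equality characterization, and both $\ddiv_\Sigma \Phi_p$ and $\ddiv_\Sigma Z$ depend continuously on $R$ (through $\varphi$, $w''$, $h$, and the geometry), so passing to the limit gives $\ddiv_\Sigma W_0 \le 1$ with equality only if $\nabla^\Sigma\dbold_p = \nabla\dbold_p$; one should note the sign of the ambient curvature term in Lemma \ref{Lphidiv}.(i) degenerates but in the favorable direction. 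Finally, with (i)--(iii) in hand, one reruns the divergence-theorem argument of Proposition \ref{Pproof} in the Euclidean ball — the only change is the constant $\frac 2k$ in place of $2I(R)$, producing $|\Sigma^k| \ge \frac{\omega_{k-1}}{k} = |B^k|$, exactly Brendle's bound. The main obstacle I anticipate is not any single estimate but carefully justifying the \emph{smooth} (not merely pointwise) convergence of the integral term $\int_R^\pi h(s)\Psi_{\gamma(s)}\,ds$ and its derivatives uniformly on compact subsets of $B\setminus\{y\}$, since $h$ itself is only given implicitly via an ODE system for general even $k$; for $k = 2, 4, 6$ this is manageable using the closed forms, but it is the step requiring the most care.
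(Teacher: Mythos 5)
Your overall strategy is the same as the paper's: rescale by $\gtilde=R^{-2}g$, compute the limits of $\Phi_p$, of $\tfrac{2I(R)}{I(\pi)}\Psi_y$, and of the integral term (identifying the limiting weight with the euclidean $u^{k-3}\,du$ of Hitotumatu, as the paper does), and then obtain (i)--(iii) by passing to the limit in Proposition \ref{Pproof}.(i)--(iii) — which is one of the two routes the paper explicitly endorses. The identification of the singular part in (i) and of the limiting integral term are correct in substance.

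There is, however, a concrete error in your bookkeeping of the radial part. You claim the scalar coefficient $1-\tfrac{k-1}{k-2}\tfrac{I(R)}{I_{k-2}(R)}$ of $\Phi_p$ in Remark \ref{RWalt} converges to $1-\tfrac{k-1}{k-2}\cdot\tfrac{1/k}{1/(k-2)}=\tfrac1k$. This is false: since $I_k(R)\sim R^k/k$ and $I_{k-2}(R)\sim R^{k-2}/(k-2)$, the ratio $I(R)/I_{k-2}(R)=O(R^2)\to 0$, so the coefficient tends to $1$. (You have substituted the \emph{rescaled} limits $R^{-k}I_k(R)\to 1/k$ and $R^{-(k-2)}I_{k-2}(R)\to 1/(k-2)$ into a ratio that is dimensionless and is not rescaled.) The factor $1/k$ in the radial part of $W_0$ comes entirely from the convergence $\Phi_p\to \tfrac{\dbold_p}{k}\nabla\dbold_p$, which you also (correctly) assert in your first paragraph; taken together your two claims would give a radial part $\dbold_p\nabla\dbold_p/k^2$, which is off by a factor of $k$ and would not reproduce Brendle's field. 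A second, smaller point: passing the equality characterization in (iii) to the limit is not automatic, because the strictly negative curvature term in $\ddiv_\Sigma\Phi_p$ from Lemma \ref{Lphidiv}.(i) that enforces it for $R>0$ degenerates to zero in the euclidean limit (where $\ddiv_\Sigma\tfrac{x}{k}\equiv 1$); the equality case must instead be read off from the singular terms of the explicit limit formula, i.e.\ by the ``direct calculation'' alternative the paper mentions.
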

In the proof we show slightly more: the limit $W_0$ is the field $W$ defined in \cite{Brendle:area}, up to a factor $2/k$.
\begin{proof}
Let $q\in B_p(R)$ and denote $\rtilde = \dboldtilde_q$.  Note that $\nablatilde \rtilde$ is a unit vector with respect to the $\gtilde$ metric.  By straightforward expansions using the definitions (recall \ref{Dphi}) we have
\begin{equation}
\label{Erescale}
  \begin{aligned}
\Phi_q &= \frac{\rtilde}{k} \nablatilde \rtilde + O(R^2)\\
2 \frac{I(R)}{I(\pi)} \Psi_q &= \frac{2}{k}  \left( -\rtilde^{1-k} + O(R)\right) \nablatilde \rtilde.
\end{aligned}
\end{equation}
Taking a limit as $R\searrow 0$, on the limit euclidean ball $(B_1(0), \left. g\right|_p)$,  $\Phi_q$ and $2 \frac{I(R)}{I(\pi)} \Psi_q$ converge to
\begin{align}
\label{Elimit}
\frac{x}{k} \quad \text{and} \quad -\frac{2}{k} \frac{x}{|x|^k},
\end{align}
where here $x$ is the position vector field on $B_1(0)$.

We first discuss the $k=2$ case.  The field $W$ from \cite[Definition 2.11]{FM} used to prove the two-dimensional area bound is $W = (\cos R) \Phi_p + (1-\cos R) \Psi_y$.  Noting that in this case $2 I(R)/I(\pi)  = 1- \cos R$, it follows from \eqref{Erescale} and \eqref{Elimit} by taking $R\searrow 0$, that on the limit euclidean ball $(B_1(0), \left. g\right|_p)$, $W$ converges to (in the notation of \cite{Brendle:area})
\[ 
\frac{x}{2} - \frac{x-y}{|x-y|^2},
\] 
which is the vector field used by Brendle in dimension 2.  (i)-(iii) can be checked either by passing to the limit from items (i)-(iii) of Proposition \ref{Pproof} or by direct calculation from the limit formula above. 

In dimensions $k = 4$ and $6$, the integral terms require some care.  Straightforward but tedious calculations using the explicit formulas for $h(s)$ in \ref{dW} and \ref{Pexp} when $k=4$ and $6$ show that as $R\searrow 0$, $2 \frac{I(R)}{I(\pi)} \int_R^\pi h(s) \Psi_{\gamma(s)}\, ds$ converges to
\begin{align*}
\frac{k-2}{k}\int_{1}^\infty u^{k-3} \frac{x - u y}{|x-uy|^k}\, du, 
\end{align*}
(cf. also \cite[Equation (1)]{Hitotumatu} and the discussion thereafter) which after the change of variable $u(t) = 1/t$ is equal to $\frac{k-2}{k} \int_0^1 \frac{tx - y}{|tx - y|^k} \, dt$. Using \ref{RWalt}, \eqref{Elimit} and the preceding, it follows that the limit $W_0$ is
	\begin{align*}
	W_0 = \frac{x}{k} - \frac{2}{k} \frac{x-y}{|x-y|^k} - \frac{k-2}{k}\int_0^1 \frac{tx-y}{|tx-y|^k}\, dt,
	\end{align*}
	which is up to a factor of $k/2$ the vector field defined in \cite{Brendle:area}.
\end{proof}

\end{appendices}
\bibliographystyle{plain}

\end{document}